\newtheorem{thm}{Theorem}[section]
\newtheorem{lem}{Lemma}[section]
\theoremstyle{definition}
\newtheorem{zau}{Remark}[section]
\DeclareMathOperator{\pr}{\mathsf P}
\DeclareMathOperator{\M}{\mathsf E}
\newcommand{\rrvert}{\vert}
\newcommand{\llvert}{\vert}
\newcommand{\si}{\sigma}
\newcommand{\R}{\mathbb{R}}
\newcommand{\G}{\mathbb{G}}
\newcommand{\T}{\mathbb{T}}
\newcommand{\I}{\mathbb{I}}
\newcommand{\F}{\mathcal{F}}
\newcommand{\Om}{\varOmega}
\begin{document}
\begin{frontmatter}

\title{Functional limit theorems for additive and multiplicative schemes in the Cox--Ingersoll--Ross model}

\author{\inits{Yu.}\fnm{Yuliia}\snm{Mishura}}\email{myus@univ.kiev.ua}
\author{\inits{Ye.}\fnm{Yevheniia}\snm{Munchak}\corref{cor1}}\email{yevheniamunchak@gmail.com}
\cortext[cor1]{Corresponding author.}
\address{Taras Shevchenko National University of Kyiv, Volodymyrska str. 64, 01601, Kyiv, Ukraine}

\markboth{Yu. Mishura, Ye. Munchak}{Functional limit theorems in the Cox--Ingersoll--Ross model}

\begin{abstract}
In this paper, we consider the Cox--Ingersoll--Ross (CIR) process in the
regime where the process does not hit zero. We construct additive and
multiplicative discrete approximation schemes for the price of
asset that is modeled by the CIR process and geometric CIR process. In
order to construct these schemes, we take the Euler approximations of
the CIR process itself but replace the increments of the Wiener
process with iid bounded vanishing symmetric random variables. We
introduce a ``truncated'' CIR process and apply it to prove the weak
convergence of asset prices. We establish the fact that this
``truncated'' process does not hit zero under the same condition
considered for the original nontruncated process.
\end{abstract}

\begin{keyword}
Cox--Ingersoll--Ross process\sep
discrete approximation scheme\sep
functional limit theorems
\MSC[2010]  60F99\sep 60G07\sep 91B25
\end{keyword}

\received{15 December 2015}
%
\revised{22 February 2016}
%
\accepted{25 February 2016}
\publishedonline{3 March 2016}
\end{frontmatter}

\section{Introduction}\label{}
The problem of convergence of discrete-time financial models to the
models with continuous time is well developed; see, e.g., \cite{Broad,Ch,Cutl,delbaen,duff,HZ,Hu-Sc}. The reason for such an interest
can be explained as follows: from the analytical point of view, it is
much simpler to deal with continuous-time models although all
real-world models operate in the discrete time. In what concerns the
rate of convergence, there can be different approaches to its
estimation. Some of this approaches are established in \cite{Broad,Ch,Mishura3,Mishura1,Mishura,Mish_Mun_2,Mish_Mun_3}. In this paper, we
consider the Cox--Ingersoll--Ross process and its approximation on a
finite time interval. The CIR process was originally proposed by Cox,
Ingersoll, and Ross \cite{Cox_Ing_Ross} as a model for short-term
interest rates. Nowadays, this model is widely used in financial
modeling, for example, as the volatility process in the Heston model
\cite{Heston}. The strong global approximation of CIR process is
studied in several articles. Strong convergence (without a rate or with
a logarithmic rate) of several discretization schemes is shown by \cite
{Alfonsi,Bossy,delbaen1,Gyongy,Higham}. In \cite{Alfonsi},
a~general framework for the analysis of strong approximation of the CIR
process is presented along with extensive simulation studies.
Nonlogarithmic convergence rates are obtained in \cite{Berkaoui}. In
\cite{Deelstra}, the author extends the CIR model of the short interest
rate by assuming a stochastic reversion level, which better reflects
the time
dependence caused by the cyclical nature of the economy or by
expectations concerning the future impact of monetary policies. In this
framework, the convergence of the long-term return by using the theory
of generalized Bessel-square processes is studied. In \cite{Pearson},
the authors propose an empirical method that utilizes the conditional
density of the state variables to estimate and test a term structure
model with known price formula using data on both discount and coupon
bonds. The method is applied to an extension of a two-factor model due
to Cox, Ingersoll, and Ross. Their results show that estimates based
solely on bills imply unreasonably large price errors for longer
maturities. The process is also discussed in \cite{Brigo}.

In this article, we focus on the regime where the CIR process does not
hit zero and study weak approximation of this process. In the first
case, the sequence of prelimit markets is modeled as the sequence of
the discrete-time additive stochastic processes, whereas in the second
case, the sequence of multiplicative stochastic processes is modeled.
The additive scheme is widely used, for example, in the papers \cite
{Alfonsi,Bossy,Andreas}. The papers \cite{Deelstra,Pearson} are
recent examples of modeling a stochastic interest rate by the
multiplicative model of CIR process. In \cite{Deelstra}, the authors
say that the model has the ``strong convergence property,'' whereas
they refer to models as having the ``weak convergence property'' when
the returns converge to a constant, which generally depends upon the
current economic environment and that may change in a stochastic
fashion over time. We construct a discrete approximation scheme for the
price of asset that is modeled by the Cox--Ingersoll--Ross process. In
order to construct these additive and multiplicative processes, we take
the Euler approximations of the CIR process itself but replace the
increments of the Wiener process with iid bounded vanishing symmetric
random variables. We introduce a ``truncated'' CIR process and use it
to prove the weak convergence of asset prices.

The paper is organized as follows. In Section~\ref{section2}, we
present a complete and ``truncated'' CIR process and establish that the
``truncated'' CIR process can be described as the unique strong
solution to the corresponding stochastic differential equation. We
establish that this ``truncated'' process does not hit zero under the
same condition as for the original nontruncated process. In Section~\ref{section3}, we present discrete approximation schemes for both these
processes and prove the weak convergence of asset prices for the
additive model. In the next section, we prove the weak convergence of
asset prices for the multiplicative model. Appendix contains additional
and technical results.\looseness=1


\section{Original and ``truncated'' Cox--Ingersoll--Ross processes and
some of their\hfill\break properties}\label{section2}
Let $\Om_{\F}= (\Om,\F,(\F_t,t\ge0),\pr )$ be a complete
filtered probability space, and $W= \{W_t,\F_t,t\ge0 \}$ be an
adapted Wiener process. Consider a Cox--Ingersoll--Ross process with
constant parameters on this space. This process is described as the
unique strong solution of the following stochastic differential equation:
\begin{equation}
\label{CIR} dX_t=(b-X_t)dt+\si\sqrt{X_t}dW_t, \quad X_0=x_0>0, \; t \ge0,
\end{equation}
where $b>0$, $\si>0$.
The integral form of the process $X$ has the following form:
\begin{equation*}
\label{explicit_CIR} X_t=x_0+\int\limits
_0^t
(b-X_s )ds+\si\int\limits
_0^t\sqrt{X_s}dW_s.
\end{equation*}
According to the paper \cite{Cox_Ing_Ross}, the condition $\si^2\le2b$
is necessary and sufficient for the process $ X $ to get positive
values and not to hit zero. Further, we will assume that this condition
is satisfied.

For the proof of functional limit theorems, we will need a modification
of the Cox--Ingersoll--Ross process with bounded coefficients. This
process is called a truncated Cox--Ingerssol--Ross process.
Let $C>0$. Consider the following stochastic differential equation with
the same coefficients $b$ and $\si$ \xch{as in}{as in Eq.}~\eqref{CIR}:
\begin{equation}
\label{procc} dX_t^C=\bigl(b-X_t^C \wedge C\bigr)dt+\si\sqrt{\bigl(X_t^C\vee0\bigr) \wedge C}dW_t, \quad X_0=x_0 >0, \; t \ge0.
\end{equation}
\begin{lem}\label{lem1}
For any \xch{$C>0$,}{$C>0$, Eq.}~\eqref{procc} has a unique strong solution.
\end{lem}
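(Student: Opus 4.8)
The plan is to establish existence and uniqueness of a strong solution to~\eqref{procc} by verifying that the drift and diffusion coefficients satisfy suitable regularity conditions and then invoking a standard Yamada--Watanabe-type theorem. Write $a(x) = b - (x\wedge C)$ and $\sigma(x) = \si\sqrt{(x\vee 0)\wedge C}$, so that~\eqref{procc} reads $dX_t^C = a(X_t^C)\,dt + \sigma(X_t^C)\,dW_t$.

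First I would check that the drift $a$ is globally Lipschitz: indeed $x\mapsto x\wedge C$ is $1$-Lipschitz on $\R$, so $|a(x)-a(y)|\le |x-y|$ for all $x,y\in\R$. This is immediate and is the easy half.

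The main work is the diffusion coefficient $\sigma$. It is bounded (by $\si\sqrt C$) and continuous, but it is only Hölder-$\tfrac12$ continuous near the points where $(x\vee 0)\wedge C$ has a corner, so it is \emph{not} Lipschitz. The key step is therefore to verify a Hölder-$\tfrac12$ bound of the form $|\sigma(x)-\sigma(y)|\le \si\sqrt{|x-y|}$ uniformly in $x,y\in\R$; this follows from the elementary inequality $\bigl|\sqrt{u}-\sqrt{v}\,\bigr|\le\sqrt{|u-v|}$ for $u,v\ge 0$ together with the fact that $x\mapsto (x\vee 0)\wedge C$ is $1$-Lipschitz. Having this, pathwise uniqueness follows from the Yamada--Watanabe condition (the function $\rho(u)=\si\sqrt{u}$ satisfies $\int_{0+}\rho^{-2}(u)\,du = +\infty$), and weak existence follows from the continuity and linear (in fact bounded) growth of the coefficients via the standard existence theorem for one-dimensional SDEs. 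Combining weak existence with pathwise uniqueness yields, by the Yamada--Watanabe theorem, the existence of a unique strong solution.

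I expect the only genuine obstacle to be making precise that the coefficients fall under the scope of the quoted existence/uniqueness result despite the lack of Lipschitz continuity of $\sigma$; once the global Hölder-$\tfrac12$ estimate on $\sigma$ and the global Lipschitz estimate on $a$ are in hand, the rest is a direct citation. Boundedness of both coefficients also guarantees non-explosion, so the solution is defined for all $t\ge 0$.
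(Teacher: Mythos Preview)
Your proposal is correct and follows essentially the same route as the paper: the paper's proof simply asserts that the coefficients $\si(x)=\si\sqrt{(x\vee0)\wedge C}$ and $b(x)=b-(x\wedge C)$ satisfy the hypotheses of the Yamada--Watanabe-type result quoted in the appendix (Theorem~\ref{watanabe2}) together with the linear growth condition~\eqref{gr_cond}, and concludes. Your write-up just makes explicit the verifications the paper leaves to the reader---the $1$-Lipschitz property of the drift (giving $k(u)=u$) and the global H\"older-$\tfrac12$ bound on the diffusion via $|\sqrt u-\sqrt v|\le\sqrt{|u-v|}$ composed with the $1$-Lipschitz truncation (giving $\rho(u)=\si\sqrt u$ with $\int_{0+}\rho^{-2}(u)\,du=\infty$).
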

\begin{proof}
Since the coefficients $\si(x)=\si\sqrt{(x\vee0)\wedge C}$ and
$b(x)=b-(x\wedge C)$ satisfy the conditions of Theorem~\ref{watanabe2}
and also the growth condition \eqref{gr_cond}, a~global strong solution
$X_t^C$ exists uniquely for every given initial value $x_0$.
\end{proof}
\begin{zau}
Denote $\si_{-\epsilon}=\inf{ \{t: X_t^C=-\epsilon \}}$ with
$\epsilon>0$ such that $-\epsilon+b>0$. Suppose that $\pr(\si_{-\epsilon
}<\infty)>0$. Then for any $r<\si_{-\epsilon}$ such that $X_t^C<0$ for
$t\in(r,\si_{-\epsilon})$, we would have, with positive probability,
\[
dX_t^C=\bigl(b-X_t^C\wedge C
\bigr)dt>0
\]
on the interval $(r,\si_{-\epsilon})$, and hence $t\rightarrow X_t^C$
would increase in this interval. This is obviously impossible.
Therefore, $X_t^C$ is nonnegative and can be written as
\begin{equation}
\label{procc1}
dX_t^C=\bigl(b-X_t^C \wedge C\bigr)dt+\si\sqrt{X_t^C\wedge C}dW_t, \quad X_0^C=x_0>0, \ t \ge0.
\end{equation}
\end{zau}\eject
The integral form of the process $X^C$ is as follows:
\begin{equation*}
\label{truncated_explicit_CIR} X_t^C=x_0+\int\limits
_0^t
\bigl(b-X_s^C\wedge C\bigr)ds+\si\int\limits
_0^t
\sqrt {X_s^C\wedge C}dW_s.
\end{equation*}
\begin{lem}\label{lem2}
Let $2b\ge\si^2$ and $C>b\vee1$. Then the trajectories of the process
$X^C$ are positive with probability 1.
\end{lem}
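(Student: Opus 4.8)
The plan is to mimic the classical Feller-type argument for non-attainability of zero by the genuine CIR process, adapted to the truncated equation \eqref{procc1}. The key observation is that, as long as the trajectory stays at or below the level $C$, the truncated process satisfies exactly the same stochastic differential equation as the original CIR process; so the difficulty is entirely concentrated near zero, where no truncation is active when $C>b\vee1$. Concretely, I would fix a small $\epsilon\in(0,x_0)$ and a large $N$ with $x_0<N<C$ (so that both boundaries lie in the region where the coefficients are the untruncated CIR coefficients, using $C>b\vee1\ge b$ and enlarging $N$ only up to $C$), and introduce the exit time $\tau_{\epsilon,N}=\inf\{t:X_t^C=\epsilon\text{ or }X_t^C=N\}$. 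On the stochastic interval $[0,\tau_{\epsilon,N}]$ we have $X_t^C\wedge C=X_t^C$, hence $dX_t^C=(b-X_t^C)\,dt+\si\sqrt{X_t^C}\,dW_t$ there.

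Next I would use a scale function. Let $s(x)$ be a scale function for the CIR generator $\mathcal{L}f=(b-x)f'+\tfrac12\si^2 x f''$ on $(0,\infty)$, i.e. a solution of $\mathcal{L}s=0$, given explicitly (up to affine transformations) by $s'(x)=x^{-2b/\si^2}e^{2x/\si^2}$ and $s(x)=\int_1^x u^{-2b/\si^2}e^{2u/\si^2}\,du$. Under the standing assumption $2b\ge\si^2$ we have $2b/\si^2\ge1$, so $s'(x)\sim x^{-2b/\si^2}$ near $0$ is non-integrable, whence $s(0^+)=-\infty$. Applying Itô's formula to $s(X_t^C)$ on $[0,t\wedge\tau_{\epsilon,N}]$, the drift term vanishes because $\mathcal{L}s=0$, so $s(X^C_{t\wedge\tau_{\epsilon,N}})$ is a local martingale; since it is bounded (the process is confined to $[\epsilon,N]$ up to the exit time and $s$ is continuous there), it is a true martingale, and optional stopping gives the usual formula
\[
\pr\bigl(X^C_{\tau_{\epsilon,N}}=\epsilon\bigr)=\frac{s(N)-s(x_0)}{s(N)-s(\epsilon)}.
\]
Letting $\epsilon\downarrow0$, the right-hand side tends to $0$ because $s(\epsilon)\to-\infty$ while $s(N)$ and $s(x_0)$ are finite. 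Hence $\pr(\text{$X^C$ hits $0$ before $N$})=0$ for every $N<C$; since $N$ can be taken arbitrarily close to $C$, and once the process reaches a level below $C$ it is still governed by the CIR dynamics (so the argument restarts), a standard localization over an increasing sequence of levels $N\uparrow C$ together with the strong Markov property yields that $X^C$ never reaches $0$, i.e. its trajectories are a.s. positive. One must also note the process cannot first jump above $C$ and then come down through $0$ without the argument applying, but continuity of paths and re-running the estimate from the first time the path re-enters $(0,C)$ handles this.

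The main obstacle I anticipate is the bookkeeping around the truncation level $C$: the clean scale-function computation only controls hitting $0$ before hitting a level $N$ strictly below $C$, and one has to argue that the excursions of $X^C$ above $C$ cannot conspire to drive the process to $0$. This is resolved by the remark preceding the lemma (the drift $b-X_t^C\wedge C$ is positive whenever $X_t^C<b$, and $C>b$, so near $0$ the process is pushed upward exactly as in the CIR case) together with path continuity: define $\rho=\sup\{t\le\tau_0: X_t^C\ge b\}$ before any hypothetical hitting time $\tau_0$ of $0$; on $(\rho,\tau_0)$ the process stays in $(0,b)\subset(0,C)$, the coefficients are the CIR ones, and the scale-function estimate applied from time $\rho$ with barrier $N=b$ gives probability zero. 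So the only genuinely analytic input is the divergence $s(0^+)=-\infty$ under $2b\ge\si^2$, which is immediate from $2b/\si^2\ge1$; the rest is localization and the strong Markov property. I would also remark that this shows the non-attainability condition for $X^C$ is exactly the one, $\si^2\le2b$, stated for the original process $X$ after \eqref{CIR}.
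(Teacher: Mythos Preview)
Your core analytic input---that the scale function diverges to $-\infty$ at $0$ precisely when $2b\ge\si^2$---is exactly what the paper uses. The difference lies in how the scale function is deployed, and this is where your argument has a real gap. The patching step via $\rho=\sup\{t\le\tau_0:X_t^C\ge b\}$ does not work as written: $\rho$ is a last-exit time, not a stopping time, so the strong Markov property cannot be invoked at~$\rho$; moreover $X^C_\rho=b$ coincides with your proposed upper barrier, so the exit-probability formula started ``from time $\rho$ with barrier $N=b$'' is vacuous. Your other suggestion, ``localization over $N\uparrow C$ with the strong Markov property,'' does not address the actual difficulty either: the problem is repeated excursions above a \emph{fixed} level, not the choice of $N$. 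A rigorous repair is possible (an excursion/down-crossing argument with genuine stopping times, plus the observation that a continuous path cannot oscillate between two fixed levels infinitely often in finite time), but it is honest work, not ``standard localization.'' Note also that your setup tacitly assumes $x_0<C$, which is not among the hypotheses.

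The paper sidesteps all of this by building the scale function $V$ from the \emph{truncated} coefficients $f(x)=b-x\wedge C$, $g(x)=\si\sqrt{x\wedge C}$ on the whole half-line $(0,\infty)$ and computing both boundary limits. Near $0$ the condition $C>1$ guarantees that the coefficients agree with the CIR ones on $(0,1)$, so your computation applies verbatim and gives $V(0^+)=-\infty$ under $2b\ge\si^2$. For $x>C$ the truncated drift is the constant $b-C<0$ and the diffusion coefficient is the constant $\si\sqrt{C}$; a short explicit calculation (this is where $C>b$ is used) then yields $V(+\infty)=+\infty$. With both limits in hand, the standard Feller-test contradiction on $(0,\infty)$ gives $\pr(\tau_0\wedge\tau_\infty<\infty)=0$ in one stroke, with no bookkeeping around the level $C$ and no restriction on $x_0$. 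If you want to salvage your route, the cleanest fix is simply to extend your scale function across $C$ using the truncated coefficients and check $s(+\infty)=+\infty$; at that point you have reproduced the paper's proof.
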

\begin{proof}
In order to prove that the process $X^C$ is positive, we will use the
proof similar to that given in \cite[p. 308]{Mao} for the complete
Cox--Ingersoll--Ross process with corresponding modifications. Note
that the coefficients $g(x):=\si\sqrt{x\wedge C}$ and $f(x):=b-x\wedge
C$ \xch{of}{of Eq.}~\eqref{procc1} are continuous and $g^2(x)>0$ on $x\in(0,\infty
)$. Fix $\alpha$ and $\beta$ such that $0<\alpha<x_0<\beta$. Due to the
nonsingularity of $g $ on $[\alpha,\beta]$, there exists a unique
solution $F(x)$ of the ordinary differential equation
\[
f(x)F'(x)+\frac{1}{2}g^2(x)F''(x)=-1, \quad  \alpha<x<\beta,
\]
with boundary conditions $F(\alpha)=F(\beta)=0$, and this solution is
nonnegative, which follows from its representation through a
nonnegative Green function given in \cite[p.~343]{Karatzas}. Define the
stopping times
\[
\tau_\alpha=\inf \bigl\{t\ge0: X_t^C\le\alpha
\bigr\} \quad \text{and} \quad \tau_\beta=\inf \bigl\{t\ge0:
X_t^C\ge\beta \bigr\}.
\]
By the It\^o formula, for any $t>0$,
\begin{equation}
\label{equation1} \M F \bigl(X^C(t\wedge\tau_\alpha\wedge
\tau_\beta) \bigr)=F(x_0)-\M (t\wedge\tau_\alpha
\wedge\tau_\beta).
\end{equation}
This formula and nonnegativity of $ F $ imply that
\[
\M(t\wedge\tau_\alpha\wedge\tau_\beta)\le F(x_0)
\]
and, as $t\rightarrow\infty$,
\[
\M(\tau_\alpha\wedge\tau_\beta)\le F(x_0)<\infty.
\]
This means that $X^C$ exits from every compact subinterval of $[\alpha,
\beta]\subset(0,\infty)$ in finite time. It follows from the boundary
conditions and equality $\pr(\tau_\alpha\wedge\tau_\beta<\infty)=1$
that $\lim_{t\rightarrow\infty}\M F (X^C(t\wedge\tau_\alpha
\wedge\tau_\beta) )=0$, and then from \eqref{equation1} we have
\begin{equation*}
\M(\tau_\alpha\wedge\tau_\beta)=F(x_0).
\end{equation*}
Let us now define the function
\[
V(x)=\int\limits
_{1}^{x}\exp \biggl\{-\int\limits
_{1}^{y}
\frac
{2f(z)}{g^2(z)}dz \biggr\}dy, \quad x\in(0,\infty),
\]
which has a continuous strictly positive derivative $V'(x)$, and the
second derivative $V''(x)$ exists and satisfies
$V''(x)=-\frac{2f(x)}{g^2(x)}V'(x)$.
The It\^o formula shows that, for any $t>0$,\vadjust{\eject}
\[
V \bigl(X^C(t\wedge\tau_\alpha\wedge\tau_\beta)
\bigr) =V(x_0)+\int\limits
_{0}^{t\wedge\tau_\alpha\wedge\tau_\beta}V'
\bigl(X^C_u \bigr)g \bigl(X^C_u
\bigr)dW(u)
\]
and
\[
\M V \bigl(X^C(t\wedge\tau_\alpha\wedge\tau_\beta)
\bigr) =V(x_0).
\]
Taking the limit as $t\rightarrow\infty$, we get\begingroup\abovedisplayskip=12pt\belowdisplayskip=12pt
\[
V(x_0)=\M V\bigl(X^C(\tau_\alpha\wedge
\tau_\beta)\bigr)=V(\alpha)\pr(\tau_\alpha <\tau_\beta)+V(
\beta)\pr(\tau_\beta<\tau_\alpha),
\]
and hence
\begin{equation}
\label{equation2} \pr(\tau_\alpha<\tau_\beta)=\frac{V(\beta)-V(x_0)}{V(\beta)-V(\alpha)}
\quad \text{and} \quad \pr(\tau_\beta<\tau_\alpha)=\frac{V(x_0)-V(\alpha
)}{V(\beta)-V(\alpha)}.
\end{equation}
Consider the integral
\begin{equation*}
\begin{split} \label{compute} V(x)&=\int\limits
_{1}^{x}\exp
\biggl\{-\int\limits
_{1}^{y}\frac
{2(b-z\wedge C)}{\si^2(z\wedge C)}dz \biggr\}dy. \end{split}
\end{equation*}
First, consider the case $x<1 $. Then
\begin{equation*}
\begin{split} \label{compute1} V(x)&=\int\limits
_{1}^{x}\exp
\biggl\{-\int\limits
_{1}^{y}\frac{2(b-z)}{
\si^2z}dz \biggr\}dy=\int\limits
_{1}^{x}y^{-\frac{2b}{\si^2}}
\exp \biggl\{ \frac{2(y-1)}{\si^2} \biggr\}dy, \end{split} %
\end{equation*}
and if $\si^2\le2b$, then
\[
\lim\limits
_{x\downarrow0}V(x)=-\infty.
\]
Now let $x$ increase and tend to infinity. Denote
$C_1=\int_{1}^{C}\exp \{\frac{2(y-1)}{\si^2} \}
y^{-\frac{2b}{\si^2}}dy$. Then, for $x>C$,\endgroup
%
\begin{align*}
\label{compute2}
V(x)&=\int\limits_{1}^{C}\exp \biggl\{-\int\limits_{1}^{y}\frac{2(b-z)}{\si^2z}dz \biggr\}dy\\
&\quad +\int\limits_{C}^{x}\exp \biggl\{-\int\limits_{1}^{C}\frac{2(b-z)}{\si^2z}dz-\int\limits_{C}^{y}\frac{2(b-C)}{\si^2C}dz \biggr\}dy\\
&=\int\limits_{1}^{C}\exp \biggl\{\frac{2(y-1)}{\si^2} \biggr\}y^{-\frac{2b}{\si^2}}dy +C^{-\frac{2b}{\si^2}}\exp{ \biggl\{\frac{2(C-1)}{\si^2} \biggr\}}\\
&\quad \times\int\limits_{C}^{x}\exp \biggl\{-\frac{2(b-C)}{\si^2C}(y-C)\biggr\} dy\\
&=C_1+C^{-\frac{2b}{\si^2}+1}\frac{\si^2}{2(C-b)} \exp \biggl\{\frac{2(C-1)}{\si^2} \biggr\}\\
&\quad \times \biggl(\exp \biggl\{\frac{2(C-b)}{\si^2C}(x-C) \biggr\}-1 \biggr),
\end{align*}
and thus $\lim_{x\uparrow\infty}V(x)=\infty$.
Define
\[
\tau_0=\lim\limits_{\alpha\downarrow0}\tau_\alpha \quad \text{and} \quad \tau_\infty=\lim\limits_{\beta\uparrow\infty}\tau_\beta
\]
and put $\tau=\tau_0\wedge\tau_\infty$. From \eqref{equation2} we get
\[
\pr \Bigl(\inf\limits
_{0\le t <\tau}X_t^C\le\alpha \Bigr)\ge\pr(\tau
_\alpha<\tau_\beta)=\frac{1-V(x_0)/V(\beta)}{1-V(\alpha)/V(\beta)},
\]
and, as $\beta\uparrow\infty$, we get that, for any $\alpha>0$,
$\pr (\inf_{0\le t <\tau}X_t^C\le\alpha )=1$,
whence, finally,
$\pr (\inf_{0\le t <\tau}X_t^C=0 )=1$.
Similarly,
$\pr (\sup_{0\le t <\tau}X_t^C=\infty )=1$.
Assume now that $\pr(\tau<\infty)>0$. Then
\[
\pr \Bigl(\lim\limits_{t\rightarrow\tau}X_t^C \ \text{exists and equals}\ 0 \ \text{or}\ \infty \Bigr)>0.
\]
So the events $ \{\inf_{0\le t <\tau}X_t^C=0 \}$ and
$ \{\sup_{0\le t <\tau}X_t^C=\infty \}$ cannot both
have probability 1. This contradiction shows that $\pr(\tau<\infty)=0$, whence
\[
\pr(\tau=\infty)=\pr \Bigl(\inf\limits
_{0\le t <\tau}X_t^C=0 \Bigr)=
\pr \Bigl(\sup\limits
_{0\le t <\tau}X_t^C=\infty \Bigr)=1
\]
if $2b\ge\si^2$.
\end{proof}
Now, let $T>0$ be fixed.
\begin{lem}
\[
\pr\bigl\{\exists\, t\in[0,T]: X_t\neq X_t^C
\bigr\}\rightarrow0
\]
as $C\rightarrow\infty$.
\end{lem}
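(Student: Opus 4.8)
The plan is to show that the original CIR process $X$ and the truncated process $X^C$ coincide on $[0,T]$ with probability tending to $1$, by controlling the event that $X$ exceeds the truncation level $C$ before time $T$. The key observation is that, by strong uniqueness (Lemma~\ref{lem1}) together with pathwise uniqueness for the CIR equation~\eqref{CIR}, the two processes agree up to the first time either of them reaches level $C$: on the set $\{\sup_{s\le t} X_s < C\}$ the coefficients of~\eqref{procc1} and~\eqref{CIR} are identical, so a localization argument gives $X_{t\wedge\tau_C}=X^C_{t\wedge\tau_C}$ almost surely, where $\tau_C=\inf\{t\ge 0: X_t\ge C\}$. Consequently
\[
\pr\bigl\{\exists\, t\in[0,T]: X_t\neq X_t^C\bigr\}\le \pr\{\tau_C\le T\}=\pr\Bigl\{\sup_{t\in[0,T]}X_t\ge C\Bigr\},
\]
and it remains to prove that the right-hand side tends to $0$ as $C\to\infty$.

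To estimate $\pr\{\sup_{t\le T}X_t\ge C\}$, I would use the fact that $X$ has finite moments: in the regime $\si^2\le 2b$ the CIR process is a well-studied positive process, and $\M[\sup_{t\le T} X_t]<\infty$. One clean way to get this is from the integral form $X_t=x_0+\int_0^t(b-X_s)\,ds+\si\int_0^t\sqrt{X_s}\,dW_s$: taking suprema, using the Burkholder--Davis--Gundy inequality on the martingale part to bound $\M\sup_{s\le t}\bigl|\int_0^s\sqrt{X_u}\,dW_u\bigr|$ by a constant times $\M\bigl(\int_0^t X_u\,du\bigr)^{1/2}$, and then applying Gronwall's lemma, one obtains $\M[\sup_{t\le T}X_t]\le K(x_0,b,\si,T)<\infty$. (Alternatively one may invoke the known explicit expression for $\M X_t$ and a submartingale/maximal-inequality argument, since $X_t+\text{const}\cdot t$ type transformations make parts of the dynamics tractable; but the BDG--Gronwall route is self-contained.) Given such a bound, Markov's inequality yields
\[
\pr\Bigl\{\sup_{t\in[0,T]}X_t\ge C\Bigr\}\le \frac{\M[\sup_{t\in[0,T]}X_t]}{C}\xrightarrow[C\to\infty]{}0,
\]
which completes the argument.

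The main obstacle is the a priori moment bound $\M[\sup_{t\le T}X_t]<\infty$: the diffusion coefficient $\si\sqrt{X_t}$ is only $1/2$-Hölder and degenerates at $0$, so one must be slightly careful that the stochastic integral $\int_0^t\sqrt{X_s}\,dW_s$ is a genuine (square-integrable) martingale rather than merely a local martingale before the Gronwall step can be justified. This is handled in the standard way by first working with the stopped process $X_{t\wedge\tau_C}$ (which is bounded, hence all integrals are honest martingales), deriving the estimate $\M[\sup_{t\le T}X_{t\wedge\tau_C}]\le K$ with $K$ independent of $C$, and then letting $C\to\infty$ via monotone convergence; this simultaneously furnishes the finite bound and legitimizes the interchange of limits. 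Everything else — the localization identifying $X$ with $X^C$ up to $\tau_C$, and the final Markov estimate — is routine.
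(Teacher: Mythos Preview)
Your proposal is correct and follows the same overall strategy as the paper: reduce to the tail event $\{\sup_{t\le T}X_t\ge C\}$ and then control it by a moment bound on the running supremum. Your reduction step is in fact more carefully argued than the paper's, which simply states ``obviously it suffices to show'' the tail probability vanishes; your explicit appeal to pathwise uniqueness and localization up to $\tau_C$ is the right justification.

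The only notable difference is in how the tail probability is estimated. You propose a first-moment bound $\M[\sup_{t\le T}X_t]<\infty$ obtained via BDG on the martingale part plus a self-contained closure argument (what you call Gronwall is really just a quadratic inequality in $\sqrt{M}$, since the drift term $-\int_0^t X_s\,ds$ is nonpositive and can be dropped). The paper instead works at the level of second moments: it quotes the explicit noncentral $\chi^2$ law of $X_t$ to get $\sup_{t\le T}\M X_t^2\le B$, then bounds $\M\sup_{t\le T}X_t^2$ directly from the integral representation using Cauchy--Schwarz on the drift and Doob's $L^2$ maximal inequality on the stochastic integral, arriving at $\pr\{\sup X_t\ge C\}\le B_1/C^2$. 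Your route is more self-contained (no need for the explicit CIR distribution) and gives $O(1/C)$; the paper's route is slightly sharper ($O(1/C^2)$) but leans on an external fact. Either is perfectly adequate here.
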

\begin{proof}
Obviously, it suffices to show
that
\[
\pr \Bigl\{\sup\limits
_{t\in[0,T]}|X_t|\ge C \Bigr\}\rightarrow0 \quad \text{as} \; C\rightarrow\infty.
\]
It is well known (see, e.g., \cite{Zhu}) that $\frac{4}{\si
^2(1-e^{-t})}X_t$ follows a noncentral $\chi^2$ distribution with (in
general) noninteger degree of freedom $\frac{4b}{\si^2}$ and
noncentrality parameter $\frac{4}{\si^2(1-e^{-t})}x_0 e^{-t}$. The
first and second moments for any $t\ge0$ are given by
\[
\M X_t=x_0e^{-t}+b\bigl(1-e^{-t}
\bigr),
\]
\[
\M(X_t)^2=x_0 \bigl(2b+\si^2
\bigr)e^{-t}+ \bigl(x_0^2-x_0\si
^2-2x_0b \bigr)e^{-2t}+ \biggl(
\frac{b\si^2}{2}+b^2 \biggr) \bigl(1-e^{-t}
\bigr)^2.
\]\eject
\noindent Therefore, there exists a constant $B>0$ such that $\M X_t^2\le B$,
whence $\M X_t\le B^{1/2}$, $0\le t\le T$.

Using the Doob inequality, we estimate
\begin{align*}
&\pr \Bigl\{\sup\limits_{t\in[0,T]}|X_t|\ge C \Bigr\}\le\frac{1}{C^2}\M \sup\limits_{t\in[0,T]}X_t^2\\
&\quad =\frac{1}{C^2}\M\sup\limits_{t\in[0,T]} \biggl\{ \biggl(X_0+\int\limits_0^t (b-X_s )ds+\si\int\limits_0^t\sqrt{X_s}dW_s \biggr)^2 \biggr\}\\
&\quad \le\frac{3}{C^2} \biggl\{X_0^2+T\M \biggl(\int\limits_0^T\llvert b-X_s\rrvert ds \biggr)^2+\si^2\M\sup\limits_{t\in[0,T]} \biggl(\int\limits_0^t\sqrt {X_s}dW_s \biggr)^2 \biggr\}\\
&\quad \le\frac{3}{C^2} \biggl\{X_0^2+T\M\int\limits_0^T (b-X_s )^2ds+4\si^2\M\int\limits_0^TX_sds \biggr\}\le\frac{B_1}{C^2}
\end{align*}
for some constant $B_1>0$.
The lemma is proved.
\end{proof}

\section{Discrete approximation schemes for complete and ``truncated''\hfill\break Cox--Ingersoll--Ross processes}\label{section3}
Consider the following discrete approximation scheme for the process
$X$. Assume that we have a sequence of the probability spaces $(\Om
^{(n)},\F^{(n)},\pr^{(n)})$, $n\ge1$. Let $\{q_k^{(n)}, n\ge1$, $0\le
k \le n\}$ be the sequence of symmetric iid random variables defined on
the corresponding probability space and taking values~ $\pm\sqrt{\frac
{T}{n}}$, that is, $\pr^n (q_k^{(n)}=\pm\sqrt{\frac{T}{n}}
)=\frac{1}{2}$. Let further $n>T$.
We construct discrete approximation schemes for the stochastic
processes $X$ and $X^C$ as follows. Consider the following
approximation for the complete process:
\begin{align}
\label{rec_sch_CIR_E}
&X_0^{(n)}=x_0>0, \qquad X_{k}^{(n)} = X_{k-1}^{(n)}+\frac{(b-X_{k-1}^{(n)})T}{n}+\si q_k^{(n)}\sqrt{X^{(n)}_{k-1}},\nonumber\\
&Q_k^{(n)}:=X_k^{(n)}-X_{k-1}^{(n)} = \frac{(b-X_{k-1}^{(n)})T}{n}+\si q_k^{(n)}\sqrt{X^{(n)}_{k-1}}, \quad 1\le k \le n,
\end{align}
and the corresponding approximations for $X^C$ given by
\begin{align}
X_0^{(n,C)}&=x_{0}>0,\nonumber\\
X_k^{(n,C)}&=X_{k-1}^{(n,C)}+\frac{(b-(X_{k-1}^{(n,C)}\wedge C))T}{n}+\si q_k^{(n)}\sqrt{X^{(n,C)}_{k-1}\wedge C},\nonumber\\
Q_k^{(n,C)}:&=X_k^{(n,C)}-X_{k-1}^{(n,C)}\nonumber\\
&=\frac{(b-(X_{k-1}^{(n,C)}\wedge C))T}{n}+\si q_k^{(n)}\sqrt{X^{(n,C)}_{k-1}\wedge C}, 1\le k \le n.\label{rec_sch_E}
\end{align}
The following lemma confirms the correctness of the construction of
these approximations.
\begin{lem}
Let, $n>2T$.
\begin{itemize}
\item[$1)$] If $2b\geq\sigma^2$, then all values given \xch{by}{by Eqs.}~\eqref
{rec_sch_CIR_E} and \eqref{rec_sch_E} are positive.
\item[$2)$] We have
\begin{equation}
\label{eq11} \pr \bigl\{\exists k, 0\le k\le n: \; X_k^{(n)}
\neq X_k^{(n,C)} \bigr\} \rightarrow0
\end{equation}
as $C\rightarrow\infty$.
\end{itemize}
\end{lem}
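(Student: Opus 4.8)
The plan is to prove the two assertions separately: the positivity in 1) by induction on $k$, and the bound in 2) by a coupling that reduces everything to a uniform second-moment estimate. For 1), set $h:=T/n$, so $h<1/2$ by hypothesis, and note that the base case $X_0^{(n)}=X_0^{(n,C)}=x_0>0$ is given. Suppose $x:=X_{k-1}^{(n)}>0$; since $q_k^{(n)}=\pm\sqrt{T/n}$, formula \eqref{rec_sch_CIR_E} gives $X_k^{(n)}\ge x(1-h)+bh-\si\sqrt{h}\,\sqrt{x}=:g(x)$, so it suffices to check $g(x)>0$. Viewing $g$ as the quadratic $u\mapsto(1-h)u^{2}-\si\sqrt{h}\,u+bh$ evaluated at $u=\sqrt{x}$, its leading coefficient $1-h$ is strictly positive — this is exactly where $n>2T$ enters — while its discriminant equals $h\bigl(\si^{2}-4b(1-h)\bigr)$, which is strictly negative because $4b(1-h)>2b\ge\si^{2}$. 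Hence $g>0$ on all of $\R$, and in particular $X_k^{(n)}>0$. For the truncated recursion \eqref{rec_sch_E} I would put $y:=X_{k-1}^{(n,C)}\wedge C$; assuming $X_{k-1}^{(n,C)}>0$ one has $0<y\le X_{k-1}^{(n,C)}$, whence $X_k^{(n,C)}\ge X_{k-1}^{(n,C)}+(b-y)h-\si\sqrt{h}\,\sqrt{y}\ge y(1-h)+bh-\si\sqrt{h}\,\sqrt{y}=g(y)>0$, so the same scalar inequality closes both inductions.

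For 2), the key observation is that the two recursions coincide until $X^{(n)}$ first exceeds $C$. Put $\tau_C:=\min\{k\ge0\colon X_k^{(n)}>C\}$. A short induction on $k$ shows $X_k^{(n,C)}=X_k^{(n)}$ for every $k\le\tau_C$: if this holds up to $k-1\le\tau_C-1$, then $X_{k-1}^{(n,C)}=X_{k-1}^{(n)}\le C$, so the truncation $\cdot\wedge C$ is inactive in the update \eqref{rec_sch_E}, which then reads exactly like \eqref{rec_sch_CIR_E}, giving $X_k^{(n,C)}=X_k^{(n)}$. Consequently, on the event $\{\max_{0\le k\le n}X_k^{(n)}\le C\}$ all the quantities $X_0^{(n)},\dots,X_n^{(n)}$ and $X_0^{(n,C)},\dots,X_n^{(n,C)}$ agree, so
\[
\pr\bigl\{\exists k,\ 0\le k\le n\colon X_k^{(n)}\neq X_k^{(n,C)}\bigr\}\le\pr\Bigl\{\max_{0\le k\le n}X_k^{(n)}\ge C\Bigr\},
\]
and it remains to bound the right-hand side.

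To that end I would establish a bound on $\M\max_{0\le k\le n}(X_k^{(n)})^{2}$ uniform in $n$. Conditioning each step on $q_1^{(n)},\dots,q_{k-1}^{(n)}$ and using $\M q_k^{(n)}=0$ and $(q_k^{(n)})^{2}=h$ yields the linear recursions $\M X_k^{(n)}=(1-h)\M X_{k-1}^{(n)}+bh$ and $\M(X_k^{(n)})^{2}\le(1-h)\M(X_{k-1}^{(n)})^{2}+c\,h$ with $c=c(b,\si)$ independent of $n$; iterating (and using $nh=T$) gives $\sup_n\sup_{0\le k\le n}\M(X_k^{(n)})^{2}<\infty$. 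Writing $X_k^{(n)}=x_0+\sum_{j=1}^{k}(b-X_{j-1}^{(n)})h+\si M_k$ with $M_k:=\sum_{j=1}^{k}q_j^{(n)}\sqrt{X_{j-1}^{(n)}}$, the drift sum is controlled by the Cauchy--Schwarz inequality together with the moment bound just obtained, while $(M_k)_{0\le k\le n}$ is a square-integrable martingale, so Doob's $L^{2}$ inequality gives $\M\max_{k\le n}M_k^{2}\le4\M M_n^{2}=4h\sum_{j=1}^{n}\M X_{j-1}^{(n)}$, again bounded uniformly in $n$. Combining, $\M\max_{0\le k\le n}(X_k^{(n)})^{2}\le B_1$ with $B_1$ independent of $n$, and Markov's inequality then gives $\pr\{\max_{0\le k\le n}X_k^{(n)}\ge C\}\le B_1/C^{2}\to0$ as $C\to\infty$, which proves \eqref{eq11}.

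I expect the only genuinely delicate point to be part 1): the threshold $n>2T$ is precisely what keeps the discriminant strictly negative — for $h=1/2$ and $\si^{2}=2b$ the one-step map can actually vanish — and one must notice that the truncated update reduces to the very same scalar inequality after replacing $x$ by $x\wedge C\le x$. The moment bookkeeping in the last paragraph is routine; its only mild subtlety is keeping all constants free of $n$, which is automatic once everything is expressed through $h=T/n$ and $nh=T$.
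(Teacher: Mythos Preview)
Your proof is correct and follows essentially the same route as the paper: induction with the quadratic-discriminant check for part~1), and for part~2) the coupling reduction to $\pr\{\max_k X_k^{(n)}\ge C\}$ followed by a uniform second-moment bound and Doob's (the paper says Burkholder's) inequality. Your organization is in fact a bit cleaner---you spell out the coupling step that the paper simply asserts, and you bound first moments before second moments rather than running the paper's simultaneous quadratic-in-$\beta$ induction---but these are cosmetic differences, not a different argument.
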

\begin{proof}
$1)$ We apply the method of mathematical induction. When $k=1$,
\[
X_1^{(n)}=x_{0}+\frac{(b-x_{0})T}{n}+\si
q_1^{(n)}\sqrt{x_{0}}.
\]
Let us show that
\begin{equation}
\label{rec_sch_E11}x_{0}+\frac{(b-x_{0})T}{n}+\si q_1^{(n)}
\sqrt{x_{0}}>0.
\end{equation}
We denote $\alpha:=\sqrt{x_0}$ and reduce \eqref{rec_sch_E11} to the
quadratic inequality
\[
\alpha^2 \biggl(1-\frac{T}{n} \biggr)\pm\si\sqrt{
\frac{T}{n}}\alpha+\frac
{bT}{n}>0,
\]
which obviously holds because the discriminant $D=\frac{\si^2T}{n}-\frac
{4bT}{n} (1-\frac{T}{n} )<0$ when $\si^2\le2b$ and $n>2T $.
So, $X_1^{(n)}>0$. Assume now that $X_{k}^{(n)}>0$. It can be shown by
applying the same transformation that when $\si^2\le2b$ and $n>2T $,
the values $X_{k+1}^{(n)}>0$.

It can be proved similarly that the values given by \eqref{rec_sch_E}
are positive.

2) $X_k^{(n)}$ can be represented as
\begin{align}
X_k^{(n)}&=x_0+\sum\limits_{i=1}^k\bigl(b-X_{i-1}^{(n)}\bigr)\frac{T}{n}+\si\sum\limits_{i=1}^k q_i^{(n)}\sqrt{X_{i-1}^{(n)}}\nonumber\\
&=X_{k-1}^{(n)}+\frac{(b-X_{k-1}^{(n)})T}{n}+\si q_k^{(n)}\sqrt{X^{(n)}_{k-1}}. \label{eq_X_k^n}
\end{align}
Compute
\begin{align}
\M\bigl(X_{i}^{(n)} \bigr)^2 & =\M \biggl(X_{i-1}^{(n)} \biggl(1- \frac{T}{n} \biggr)+\frac {bT}{n}+\si\sqrt{X_{i-1}^{(n)}}q_i^{(n)} \biggr)^2\nonumber\\
& =\M \biggl(X_{i-1}^{(n)} \biggl(1-\frac{T}{n} \biggr)+ \frac{bT}{n} \biggr)^2+\frac{\si^2T}{n}\M X_{i-1}^{(n)} \nonumber\\
& = \biggl(\frac{bT}{n} \biggr)^2+ \biggl[\frac{\si^2T}{n}+ \frac{2bT}{n} \biggl(1-\frac{T}{n} \biggr) \biggr]\M X_{i-1}^{(n)} \nonumber\\
& \quad + \biggl(1-\frac{T}{n} \biggr)^2\M \bigl(X_{i-1}^{(n)} \bigr)^2.\label{EXkn2}
\end{align}
Assume that $\M (X_{j}^{(n)} )^2\le\beta^2$, $1\le j \le i-1$,
for some $\beta>0$. Then $\M X_{j}^{(n)}\le\beta$, $1\le j \le i-1$. We
get the quadratic inequality of the form
\[
\biggl(1-\frac{T}{n} \biggr)^2\beta^2+ \biggl[
\frac{\si^2T}{n}+\frac
{2bT}{n} \biggl(1-\frac{T}{n} \biggr) \biggr]
\beta+ \biggl(\frac{bT}{n} \biggr)^2<\beta^2
\]
or, equivalently,
\[
\biggl( \biggl(1-\frac{T}{n} \biggr)^2-1 \biggr)
\beta^2+ \biggl[\frac{\si
^2T}{n}+\frac{2bT}{n} \biggl(1-
\frac{T}{n} \biggr) \biggr]\beta+ \biggl(\frac
{bT}{n}
\biggr)^2<0,
\]
which obviously holds when $\beta>\frac{\si^2+2b+\sqrt{\si^4+4b\si
^2+8b^2}}{\frac{3}{2}}$. So, for all $1\le i \le n$,
$\M X_{i}^{(n)}\le\frac{\si^2+2b+\sqrt{\si^4+4b\si^2+8b^2}}{\frac
{3}{2}} \vee x_0=:\gamma$.

Using the Burkholder inequality, we estimate
\begin{align*}
0\le\M\sup\limits_{0\le k \le n} \bigl(X_k^{(n)} \bigr)^2&\le2 (x_0+bT )^2+ 2\si^2\M\sup\limits_{0\le k \le n}\Biggl(\sum\limits_{i=1}^{n} q_{i}^{(n)}\sqrt{X_{i-1}^{(n)}} \Biggr)^2\\
&\le2 (x_0+bT )^2+ 8\si^2\M \Biggl(\sum\limits_{i=1}^{n} q_{i}^{(n)}\sqrt{X_{i-1}^{(n)}} \Biggr)^2\\
&\le2 (x_0+bT )^2+8\sigma^2\gamma T.
\end{align*}
Therefore,
\begin{align*}
\pr \bigl\{\exists k, 0\le k \le n: X_k^{(n)}\neq X_k^{(n,C)} \bigr\}&=\pr \Bigl\{\sup_{0\le k \le n}X_k^{(n)}\geq C\Bigr\}\leq C^{-2}\M\sup_{0\le k \le n}\bigl(X^{(n)}_k\bigr)^2\\
&\leq 2C^{-2}(x_0+bT)^2+8\sigma^2C^{-2}\gamma T,
\end{align*}
whence the proof follows.
\end{proof}
Consider the sequences of step processes corresponding to these schemes:
\begin{equation*}
\label{eq_disproc} X^{(n)}_t=X^{(n)}_{k} \;
\text{for } \frac{kT}{n} \le t < \frac{(k+1)T}{n}
\end{equation*}
and
\begin{equation*}
\label{eq_disproc1} X^{ (n,C) }_t=X^{(n,C)}_{k} \;
\text{for } \frac{kT}{n} \le t < \frac
{(k+1)T}{n}.
\end{equation*}
Thus, the trajectories of the processes $X^{(n)}$ and $X^{(n,C)}$ have
jumps at the points
$kT/n\;,k=0,\ldots, n$, and are constant on the interior intervals.
Consider the filtrations $\F_k^{n}=\sigma(X^{(n)}_t,\, t\le\frac
{kT}{n} )$. The processes $X^{(n,C)}$ are adapted with respect to them.
Therefore, we can consider the same filtrations for all discrete
approximation schemes. So, we can identify $\F^n_t$ with $\F_k^{n}$ for
$\frac{kT}{n} \le t < \frac{(k+1)T}{n}$.
\begin{zau}\label{zau3.1}
Now we can rewrite relation \eqref{eq11} as follows:
\[
\pr \bigl\{\exists t, t\in[0,T]: X_t^{(n)}\neq
X_t^{(n,C)} \bigr\} \rightarrow0
\]
as $C\rightarrow\infty$.
\end{zau}
Denote by $\mathbb{Q}$ and $\mathbb{Q}^n, n\ge1$, the measures
corresponding to the processes
$X$ and $X^{(n)}, n\ge1$, respectively, and by $\mathbb{Q}^C$ and
$\mathbb{Q}^{n,C}, n\ge1$, the measures corresponding to the processes
$X^C$ and $X^{(n,C)}, n\ge1$, respectively. Denote by $\stackrel
{W}{\longrightarrow}$ the weak convergence of measures corresponding to
stochastic processes. We apply Theorem~3.2 from \cite{Mishura3} to
prove the weak convergence of measures $\mathbb{Q}^{n,C}$ to the
measure $\mathbb{Q}^C$. This theorem can be formulated as follows.
\begin{thm}
\label{conditions_E}
\begingroup
\abovedisplayskip=6pt
\belowdisplayskip=6pt
Assume that the following conditions are satisfied:
\begin{itemize}
\item[\rm(i)] For any $\epsilon>0$,
\[
\lim\limits
_{n}\pr \Bigl(\sup\limits
_{1\le k \le n}\big|Q_k^{(n,C)}\big|
\ge \epsilon \Bigr)=0;
\]
\item[\rm(ii)] For any $\epsilon>0$ and $a\in (0,1] $,
\begin{align*}
&\lim\limits_{n}\pr^n \biggl(\sup\limits_{t\in\T}\biggl\llvert\sum\limits_{1\le k\le [\frac{nt}{T} ]}\M \bigl(Q_k^{(n,C)}\I_{|Q_k^{(n,C)}|\le a} \big\rrvert \F_{k-1}^n \bigr)\\
&\qquad -\int\limits_{0}^{t}\bigl(b-X_s^{(n,C)}\wedge C\bigr)ds\biggr\rrvert \ge \epsilon \biggr)=0;
\end{align*}
\item[\rm(iii)] For any $\epsilon>0$ and $a\in (0,1] $,
\begin{align*}
&\lim\limits_{n}\pr^n \biggl(\sup\limits_{t\in\T}\biggl\llvert\sum\limits_{1\le k\le [ \frac{nt}{T} ]} \Bigl(\M \bigl( \bigl(Q_k^{(n,C)}\bigr)^2\I_{|Q_k^{(n,C)}|\le a} \big\rrvert \F_{k-1}^n\bigr) \\
&\qquad - \bigl(\M \bigl(Q_k^{(n,C)}\I_{|Q_k^{(n,C)}|\le a} \big\rrvert\F_{k-1}^n \bigr) \bigr)^2 \Bigr)-\si^2\int\limits_{0}^{t}\bigl(X_s^{(n,C)}\wedge C\bigr) ds\biggr\rrvert \ge\epsilon \biggr)=0;
\end{align*}
\end{itemize}
\endgroup
Then
$\mathbb{Q}^{n,C}\stackrel{W}{\longrightarrow}\mathbb{Q}^C$.
\end{thm}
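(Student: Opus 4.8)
The plan is to obtain the statement as the specialization of the general functional limit theorem for discrete additive schemes, Theorem~3.2 of \cite{Mishura3}, to the coefficients $b^{C}(x):=b-x\wedge C$ and $a^{C}(x):=\sigma^{2}(x\wedge C)$ of the truncated equation~\eqref{procc1}. To invoke that theorem one only needs the limiting equation to be well posed: the coefficients $b^{C}$ and $a^{C}$ are bounded (by $b+C$ and $\sigma^{2}C$) and continuous on $\R$, and \eqref{procc1} has a solution unique in law --- which is precisely the content of Lemma~\ref{lem1}, whose pathwise uniqueness via the Yamada--Watanabe-type criterion of Theorem~\ref{watanabe2} (applicable because $x\mapsto\sigma\sqrt{(x\vee0)\wedge C}$ is $1/2$-H\"older) yields uniqueness in law. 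Since the prelimit processes $X^{(n,C)}$ are, by construction~\eqref{rec_sch_E}, $\F^{n}$-adapted semimartingales with jumps only at the grid points $kT/n$, and since conditions (i)--(iii) are exactly the three convergence requirements that Theorem~3.2 of \cite{Mishura3} imposes on the truncated characteristics of such a scheme, the cited theorem applies and gives $\mathbb{Q}^{n,C}\stackrel{W}{\longrightarrow}\mathbb{Q}^{C}$.

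For completeness I indicate how the argument behind that general theorem runs, should a self-contained proof be wanted. Fix the truncation function $h_{a}(x)=x\,\I_{|x|\le a}$ and regard $X^{(n,C)}$ as a piecewise-constant semimartingale with first characteristic $B^{(n)}_{t}=\sum_{1\le k\le[nt/T]}\M\bigl(h_{a}(Q_{k}^{(n,C)})\mid\F^{n}_{k-1}\bigr)$ and second (modified) characteristic $G^{(n)}_{t}=\sum_{1\le k\le[nt/T]}\V\bigl(h_{a}(Q_{k}^{(n,C)})\mid\F^{n}_{k-1}\bigr)$, the jump measure being concentrated on the increments $Q_{k}^{(n,C)}$. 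Step one: condition (i), together with the deterministic bound $|Q_{k}^{(n,C)}|\le(b+C)T/n+\sigma\sqrt{CT/n}\to0$, makes the increments asymptotically negligible and, combined with the Aldous--Rebolledo criterion applied to $B^{(n)}$ and $G^{(n)}$ (whose increments are uniformly $O(1/n)$ because $X^{(n,C)}\wedge C\le C$ and by (ii), (iii)), yields tightness of $(\mathbb{Q}^{n,C})_{n}$ in $D([0,T])$ with every subsequential limit supported on continuous trajectories. Step two: along any weakly convergent subsequence with limit $\tilde X$, conditions (ii), (iii) and the continuity of $b^{C}$ and $a^{C}$ identify the characteristics of $\tilde X$ as $\int_{0}^{t}b^{C}(\tilde X_{s})\,ds$ and $\int_{0}^{t}a^{C}(\tilde X_{s})\,ds$, so $\tilde X$ solves the martingale problem attached to~\eqref{procc1}. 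Step three: this martingale problem is well posed by Lemma~\ref{lem1}, hence $\tilde X$ has law $\mathbb{Q}^{C}$; since the subsequence was arbitrary, the whole sequence converges.

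The main obstacle in this self-contained route is the combination of the tightness step with the identification of every subsequential limit as a solution of~\eqref{procc1}; it is the truncation at level $C$ that makes both routine, since it renders the coefficients bounded and the increments uniformly $O(n^{-1/2})$, so that the Lindeberg-type condition (i) holds trivially, the Aldous--Rebolledo estimates reduce to the uniform $O(1/n)$ control of the increments of $B^{(n)}$ and $G^{(n)}$, and no moment blow-up can occur. The second essential ingredient is uniqueness in law for the square-root-type equation~\eqref{procc1}, which lies outside Lipschitz theory and rests on the Yamada condition, i.e.\ on Theorem~\ref{watanabe2} exactly as already used in Lemma~\ref{lem1}; this is what upgrades ``every limit point solves the martingale problem'' to the stated convergence $\mathbb{Q}^{n,C}\to\mathbb{Q}^{C}$. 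In the reduction route both ingredients are packaged inside \cite[Theorem~3.2]{Mishura3}, and the only remaining task is to confirm the well-posedness of~\eqref{procc1} noted above.
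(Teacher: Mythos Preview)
Your proposal is correct and matches the paper's treatment: the paper does not prove this theorem independently but simply states it as the specialization of Theorem~3.2 from \cite{Mishura3} to the truncated coefficients, exactly as you do in your first paragraph. Your additional self-contained sketch (tightness via Aldous--Rebolledo, identification via the martingale problem, uniqueness from Lemma~\ref{lem1}) goes beyond what the paper provides but is consistent with the standard argument underlying the cited result.
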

Using Theorem~\ref{conditions_E}, we prove the following result.
\begin{thm}
\label{weak_conv_E}
$\mathbb{Q}^{n,C}\stackrel{W}{\longrightarrow}\mathbb{Q}^C$.
\end{thm}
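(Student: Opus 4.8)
The plan is to verify the three conditions (i)--(iii) of Theorem~\ref{conditions_E} for the increments $Q_k^{(n,C)}$ defined in \eqref{rec_sch_E}, with the drift coefficient $b - X_t^C \wedge C$ and the (squared) diffusion coefficient $\si^2(X_t^C\wedge C)$ as the targets. Throughout I will exploit that the random variables $q_k^{(n)}$ are symmetric, take only the two values $\pm\sqrt{T/n}$, and therefore satisfy $\M(q_k^{(n)}\mid\F_{k-1}^n)=0$, $\M((q_k^{(n)})^2\mid\F_{k-1}^n)=T/n$, and $|q_k^{(n)}|=\sqrt{T/n}$ deterministically. The key boundedness feature of the truncated scheme is that $X^{(n,C)}_{k-1}\wedge C\le C$, so $\si\sqrt{X^{(n,C)}_{k-1}\wedge C}\le\si\sqrt C$; combined with $n>2T$ and the a priori moment bound $\M X_i^{(n)}\le\gamma$ established in the previous lemma, all the relevant quantities are uniformly controlled.

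\emph{Condition (i).} From \eqref{rec_sch_E}, $|Q_k^{(n,C)}|\le \frac{(b+C)T}{n}+\si\sqrt{C}\sqrt{T/n}$, which is a deterministic bound of order $n^{-1/2}$. Hence $\sup_{1\le k\le n}|Q_k^{(n,C)}|\to 0$ deterministically, and (i) follows trivially (in particular the indicator $\I_{|Q_k^{(n,C)}|\le a}$ equals $1$ for every $k$ once $n$ is large, which simplifies (ii) and (iii)).

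\emph{Conditions (ii) and (iii).} For $n$ large the truncation of the increments by $a$ is inactive, so in (ii) I must control $\sum_{k\le[nt/T]}\M(Q_k^{(n,C)}\mid\F_{k-1}^n)=\sum_{k\le[nt/T]}\frac{(b-X_{k-1}^{(n,C)}\wedge C)T}{n}$, which is exactly the left-endpoint Riemann sum of the step process $\int_0^{[nt/T]T/n}(b-X_s^{(n,C)}\wedge C)\,ds$; the difference from $\int_0^t(\cdot)\,ds$ is at most one mesh term $\frac{(b+C)T}{n}$ uniformly in $t$, hence $\to 0$. For (iii), since $\M(Q_k^{(n,C)}\mid\F_{k-1}^n)$ is $O(1/n)$, its square is $O(1/n^2)$ and summing over $k\le n$ gives $O(1/n)\to0$; the main term is $\M((Q_k^{(n,C)})^2\mid\F_{k-1}^n)$. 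Expanding the square and using $\M q_k^{(n)}=0$, $\M(q_k^{(n)})^2=T/n$, one gets $\M((Q_k^{(n,C)})^2\mid\F_{k-1}^n)=\frac{\si^2 T}{n}(X_{k-1}^{(n,C)}\wedge C)+\big(\frac{(b-X_{k-1}^{(n,C)}\wedge C)T}{n}\big)^2$; the second piece is $O(1/n^2)$ per term, so again negligible after summation, and the first piece is the Riemann sum of $\si^2\int_0^t(X_s^{(n,C)}\wedge C)\,ds$, differing from the integral by at most one mesh term $\frac{\si^2 CT}{n}\to0$ uniformly in $t$. Thus all three suprema over $t\in\T$ are bounded by deterministic quantities tending to zero, so the probabilities in (i)--(iii) vanish, and Theorem~\ref{conditions_E} yields $\mathbb{Q}^{n,C}\stackrel{W}{\longrightarrow}\mathbb{Q}^C$.

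\emph{Main obstacle.} The analytically substantive point is not any single estimate but making sure the error between the discrete conditional moments and the \emph{continuous-time} integrals $\int_0^t(b-X_s^{(n,C)}\wedge C)\,ds$ and $\si^2\int_0^t(X_s^{(n,C)}\wedge C)\,ds$ is controlled \emph{uniformly in $t\in\T$}: here one uses that the integrand is itself the piecewise-constant process built from the same values $X_{k-1}^{(n,C)}$, so the ``error'' is purely the single boundary mesh interval $[[nt/T]T/n,\,t)$, whose length is $\le T/n$ and on which the integrand is bounded by $b+C$ (resp.\ $\si^2 C$). Because the truncation makes these bounds deterministic, no concentration inequality is actually needed for (ii)--(iii); the role of the a priori bound $\M X_i^{(n)}\le\gamma$ (and the Burkholder estimate for $\M\sup_k (X_k^{(n)})^2$) is only to guarantee the scheme is well defined and that, by Remark~\ref{zau3.1}, working with the truncated scheme loses nothing in the limit $C\to\infty$.
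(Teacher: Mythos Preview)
Your proof is correct and follows essentially the same route as the paper: a deterministic $O(n^{-1/2})$ bound on $|Q_k^{(n,C)}|$ for (i), the computation $\M(Q_k^{(n,C)}\mid\F_{k-1}^n)=(b-X_{k-1}^{(n,C)}\wedge C)T/n$ together with the observation that the sum equals the step-function integral up to a single boundary mesh term for (ii), and the analogous conditional second-moment expansion for (iii). The only cosmetic difference is that the paper notes the drift-squared term in $\M((Q_k^{(n,C)})^2\mid\F_{k-1}^n)$ cancels \emph{exactly} against $(\M(Q_k^{(n,C)}\mid\F_{k-1}^n))^2$, whereas you bound each by $O(n^{-2})$ separately and sum; both work.
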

\begin{proof}
According to Theorem~\ref{conditions_E}, we need to check conditions
(i)--(iii). Relation \eqref{rec_sch_E} implies that
$\sup_{0\le k \le n}{|Q_k^{(n,C)}|}\le\frac{b+CT}{n}+\si\sqrt
{\frac{TC}{n}}$. Hence, there exists a~constant $C_2>0$ such that $\sup_{0\le k\le n}{|Q_k^{(n,C)}|}\le\frac{C_2}{\sqrt{n}}$. This
means that condition~(i) is satisfied.

Furthermore, in order to establish (ii), we consider any fixed $a>0$
and $n\geq1$ such that $\frac{C_2}{\sqrt{n}}\le a$, that is,
$n\ge (\frac{C_2}{a} )^{2}$. For such $n$,
\begin{align}
\M \bigl(Q_k^{(n,C)}\I_{|Q_k^{(n,C)}|\le a} \big\rrvert \F_{k-1}^n \bigr)&=\M\bigl(Q_k^{(n,C)} \big\rrvert \F_{k-1}^n\bigr)\nonumber\\
&=\frac{(b-(X_{k-1}^{(n,C)}\wedge C))T}{n}+\si\M q_k^{(n)}\sqrt{X^{(n,C)}_{k-1}\wedge C}\nonumber\\
&=\frac{(b-(X_{k-1}^{(n,C)}\wedge C))T}{n}.\label{exp_Q}
\end{align}
For any $\epsilon>0$, we have\vspace*{-9pt}
\begin{align*}
&\lim\limits_{n}\pr^n \biggl(\sup\limits_{t\in\T}\biggl\llvert\sum\limits_{1\le k\le [\frac{nt}{T} ]}\M \bigl(Q_k^{(n,C)}\I_{|Q_k^{(n,C)}|\le a} \bigr\rrvert \F_{k-1}^n \bigr)-\int\limits_{0}^{t}\bigl(b-\bigl(X_s^{(n,C)}\wedge C\bigr)\bigr)ds\biggr\rrvert \ge\epsilon \biggr)\\
&\quad =\lim\limits_{n}\pr^n \biggl(\sup\limits_{t\in\T}\biggl\llvert\sum\limits_{1\le k\le [\frac{nt}{T} ]}\frac{(b-(X_{k-1}^{(n,C)}\wedge C))T}{n}-\sum\limits_{0\le k\le [\frac{nt}{T}]-1}\bigl(b-\bigl(X_{k}^{(n,C)}\wedge C\bigr)\bigr)\frac{T}{n}\\
&\qquad -\bigl(b-\bigl(X_{ [\frac{nt}{T} ]}^{(n,C)}\wedge C\bigr)\bigr) \biggl(t-\frac{ [\frac{nt}{T} ]T}{n} \biggr)\biggr\rrvert \ge\epsilon \biggr)\\
&\quad =\lim\limits_{n}\pr^n \biggl(\sup\limits_{t\in\T}\biggl\llvert\bigl(b-\bigl(X_{ [\frac{nt}{T} ]}^{(n,C)}\wedge C\bigr)\bigr) \biggl(t-\frac{ [\frac{nt}{T}]T}{n} \biggr)\biggr\rrvert \ge\epsilon \biggr)=0,
\end{align*}
and hence condition (ii) is satisfied. Now let us check condition
(iii). We have
\begin{align*}
&\M \bigl( \bigl(Q_k^{(n,C)} \bigr)^2\I_{|Q_k^{(n,C)}|\le a} \rrvert \F _{k-1}^n \bigr)=\M \bigl(\bigl(Q_k^{(n,C)} \bigr)^2 \rrvert \F_{k-1}^n \bigr)\\
&\quad = \biggl(\frac{(b-(X_{k-1}^{(n,C)}\wedge C))T}{n} \biggr)^2+2\frac{(b-(X_{k-1}^{(n,C)}\wedge C))T}{n}\si\M q_k^{(n)}\sqrt {X^{(n,C)}_{k-1}\wedge C}\\
&\qquad +\si^2 \M\bigl(q_k^{(n)}\bigr)^2\bigl(X^{(n,C)}_{k-1}\wedge C \bigr)\\
&\quad = \biggl(\frac{(b-(X_{k-1}^{(n,C)}\wedge C))T}{n}\biggr)^2{+}\,\si^2 \frac{T}{n} \bigl(X^{(n,C)}_{k-1}\wedge C \bigr).
\end{align*}
Therefore, for any $\epsilon>0$,
\begin{align*}
&\lim\limits_{n}\pr^n \biggl(\sup\limits_{t\in\T}\biggl\llvert\sum\limits_{1\le k\le [ \frac{nt}{T} ]} \Bigl(\M \bigl( \bigl(Q_k^{(n,C)}\bigr)^2\I_{|Q_k^{(n,C)}|\le a} \bigr\rrvert \F_{k-1}^n\bigr) \\
&\qquad - \bigl(\M \bigl(Q_k^{(n,C)}\I_{|Q_k^{(n,C)}|\le a} \big\rrvert \F_{k-1}^n \bigr) \bigr)^2 \Bigr)-\si^2\int\limits_{0}^{t}\bigl(X_s^{(n,C)}\wedge C\bigr) ds\biggr\rrvert \ge\epsilon \biggr)\\
&\quad =\lim\limits_{n}\pr^n \biggl(\sup\limits_{t\in\T}\biggl\llvert\sum\limits_{1\le k\le [ \frac{nt}{T} ]} \biggl( \biggl(\frac{(b-(X_{k-1}^{(n,C)}\wedge C) )T}{n}\biggr)^2+\si^2\frac{T}{n} \bigl(X_{k-1}^{(n,C)}\wedge C \bigr)\\
&\qquad - \biggl(\frac{ (b-(X_{k-1}^{(n,C)}\wedge C))T}{n} \biggr)^2 \biggr)-\sum\limits_{0\le k\le [ \frac{nt}{T}]-1} \biggl(\si^2 \frac{T}{n}\bigl(X_{k}^{(n,C)}\wedge C \bigr) \biggr)\\
&\qquad -\si^2 \bigl(X_{ [\frac{nt}{T} ]}^{(n,C)}\wedge C \bigr)\biggl(t-\frac{ [\frac{nt}{T} ]T}{n} \biggr)\biggr\rrvert \ge \epsilon \biggr)\\
&\quad =\lim\limits_{n}\pr^{n} \biggl(\sup\limits_{t\in\T} \biggl(\si^2 \bigl(X_{ [\frac{nt}{T} ]}^{(n,C)}\wedge C \bigr) \biggl(t-\frac{[\frac{nt}{T} ]T}{n} \biggr) \biggr)\ge\epsilon \biggr)=0.
\end{align*}
The theorem is proved.
\end{proof}
\begin{thm}\label{weak_con}
$\mathbb{Q}^n\stackrel{W}{\longrightarrow}\mathbb{Q}$, $n\rightarrow
\infty$.
\end{thm}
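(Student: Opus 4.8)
The plan is to deduce $\mathbb{Q}^n\stackrel{W}{\longrightarrow}\mathbb{Q}$ from the convergence $\mathbb{Q}^{n,C}\stackrel{W}{\longrightarrow}\mathbb{Q}^C$ of Theorem~\ref{weak_conv_E} by letting the truncation level $C$ tend to infinity, through a standard three-$\varepsilon$ argument on the Skorokhod space $\D([0,T])$. I would first fix a metric $\rho$ metrizing weak convergence of probability measures on $\D([0,T])$ (for instance the Lévy--Prokhorov metric; recall that $\D([0,T])$ with the Skorokhod topology is a Polish space), and use the elementary coupling estimate: if two random elements of $\D([0,T])$ are defined on a common probability space and coincide outside an event of probability at most $\eta$, then the $\rho$-distance between their laws is at most $\eta$.

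Next I would bound the two ``truncation errors'' uniformly in $n$. For the prelimit schemes, $X^{(n)}$ and $X^{(n,C)}$ are built on the same space from the same variables $q_k^{(n)}$, hence they coincide on the event $\{\sup_{0\le k\le n}X_k^{(n)}<C\}$; by the estimate obtained in the proof of the lemma preceding Remark~\ref{zau3.1} (see also Remark~\ref{zau3.1}),
\[
\rho\bigl(\mathbb{Q}^n,\mathbb{Q}^{n,C}\bigr)\le\pr\bigl\{\exists\,t\in[0,T]:X^{(n)}_t\neq X^{(n,C)}_t\bigr\}\le\frac{2(x_0+bT)^2+8\si^2\gamma T}{C^2}=:\eta(C),
\]
where the crucial point is that the constant $\gamma$ there does not depend on $n$, so $\eta(C)\to0$ as $C\to\infty$ uniformly in $n$. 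For the limit objects, $X$ and $X^C$ solve stochastic differential equations driven by the same Wiener process $W$ with coefficients that agree on $[0,C]$, so by pathwise uniqueness they coincide up to the first exit of $X$ from $[0,C]$; then the last lemma of Section~\ref{section2} gives
\[
\rho\bigl(\mathbb{Q}^C,\mathbb{Q}\bigr)\le\pr\bigl\{\exists\,t\in[0,T]:X_t\neq X_t^C\bigr\}\longrightarrow0,\qquad C\to\infty.
\]

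Finally I would assemble these facts: given $\varepsilon>0$, choose $C$ so large that $\eta(C)<\varepsilon/3$ and $\rho(\mathbb{Q}^C,\mathbb{Q})<\varepsilon/3$ (both choices uniform in $n$), then apply Theorem~\ref{weak_conv_E} for this fixed $C$ to get $\rho(\mathbb{Q}^{n,C},\mathbb{Q}^C)<\varepsilon/3$ for all large $n$, and conclude $\rho(\mathbb{Q}^n,\mathbb{Q})<\varepsilon$ for all large $n$ by the triangle inequality; since $\varepsilon$ is arbitrary, $\mathbb{Q}^n\stackrel{W}{\longrightarrow}\mathbb{Q}$. I expect the only genuinely delicate point to be the uniformity in $n$ of the truncation error $\eta(C)$ — which is exactly what the $n$-independent moment bound $\M\sup_{0\le k\le n}(X^{(n)}_k)^2\le 2(x_0+bT)^2+8\si^2\gamma T$ established in the earlier lemma provides — together with a careful statement of the coupling estimate for $\rho$; the rest is the routine three-$\varepsilon$ mechanism.
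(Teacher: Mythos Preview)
Your proposal is correct and follows essentially the same route as the paper: the paper invokes Theorem~\ref{bill} (Billingsley's Theorem~4.2, restated in the Appendix), which packages exactly your three-$\varepsilon$ argument, and then verifies its hypothesis via Remark~\ref{zau3.1} just as you do. The only cosmetic difference is that the paper cites the black-box theorem while you unpack it explicitly through the L\'evy--Prokhorov metric and the coupling estimate.
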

\begin{proof}
According to Theorem~\ref{bill} and Theorem~\ref{weak_conv_E}, it
suffices to prove that
\[
\lim\limits
_{C\rightarrow\infty}\overline{\lim\limits
_{n\rightarrow
\infty}}\pr \Bigl\{\sup\limits
_{0\le t\le T}
\bigl\llvert X_t^{(n)}- X_t^{(n,C)}\bigr
\rrvert \ge\epsilon \Bigr\}=0.
\]
However, due to Remark~\ref{zau3.1},
\begin{align*}
&\lim\limits_{C\rightarrow\infty}\overline{\lim\limits_{n\rightarrow\infty}}\pr \Bigl\{\sup\limits_{0\le t\le T}\bigl\llvert X_t^{(n)}- X_t^{(n,C)}\bigr\rrvert \ge\epsilon \Bigr\}\\
&\quad \le\lim\limits_{C\rightarrow\infty}\overline{\lim\limits_{n\rightarrow\infty}}\pr \bigl\{\exists t, t\in[0,T]: X_t^{(n)}\neq X_t^{(n,C)} \bigr\}=0.\qedhere
\end{align*}
\end{proof}

\section{Multiplicative scheme for Cox--Ingersoll--Ross process}\label{section4}
In this section, we construct a multiplicative discrete approximation
scheme for the process $e^{X_t}$, $t\in[0,T]$, where $X_t$ is the CIR
process given \xch{by}{by Eq.}~\eqref{explicit_CIR}. We construct the following
multiplicative process based on the discrete approximation scheme \mbox{\eqref
{rec_sch_CIR_E}--\eqref{rec_sch_E}}. We introduce limit and prelimit
processes as follows:
\begin{flalign*}
&S_t^{n,C}=\exp \{x_0 \}\prod\limits_{1\le k \le [\frac{tn}{T} ]} \bigl(1+Q_k^{(n,C)} \bigr), \quad  t\in\mathbb{T},&\\
&S_t^C=\exp{ \biggl\{X_t^C-\frac{\si^2}{2}\int\limits_0^t\bigl(X_t^C\wedge C\bigr)dt \biggr\}}, \quad  t\in\mathbb{T},&\\
&S_t^{n}=\exp \{x_0 \}\prod\limits_{1\le k \le [\frac{tn}{T} ]} \bigl(1+Q_k^{(n)} \bigr), \quad t\in \mathbb{T},&\\
&S_t=\exp{ \biggl\{X_t-\frac{\si^2}{2}\int\limits_0^t X_t dt \biggr\}}, \quad t\in\mathbb{T},&\\
&\widetilde{S}_t^{n}=\exp \{x_0 \}\prod\limits_{1\le k \le[\frac{tn}{T} ]} \biggl[ \bigl(1+Q_k^{(n,C)} \bigr)\exp\biggl\{\frac{\si^2}{2n}X_k^{(n)} \biggr\} \biggr], \quad t \in\mathbb{T},&\\
\mbox{and}\qquad \qquad &&\\
&\widetilde{S}_t=\exp \{X_t \}, \quad t\in\mathbb{T}.&
\end{flalign*}
Denote by $\G^{C}$, $\G^{n,C}$, $\G$, $\G^{n}$, $\widetilde{\G}$, and
$\widetilde{\G}^{n}$, $n\ge1$, the measures corresponding to the
processes $S_t^{C}$, $S_t^{n,C}$, $S_t$, $S_t^{n}$, $\widetilde{S}_t$,
and $\widetilde{S}_t^{n}$, $n\ge1$, respectively.

We apply Theorem~3.3 from \cite{Mishura3} to prove the weak convergence
of measures. This theorem can be formulated as follows.
\begin{thm}
\label{conditions}
Let the following conditions hold:
\begin{itemize}
\item[\rm(i)] $\sup_{1\le k \le n}|Q_k^{(n,C)}|\stackrel{\pr
}{\longrightarrow}0,\; n\rightarrow\infty$;
\item[\rm(ii)] For any $a\in (0,1] $,
\[
\lim\limits
_{D\rightarrow\infty}\overline{\lim\limits
_{n\rightarrow
\infty}}\pr^n \biggl(\sum
\limits
_{1\le k\le n}\M \bigl( \bigl(Q_k^{(n,C)}
\bigr)^2\I_{|Q_k^{(n,C)}|\le a} \big\rrvert \F _{k-1}^n
\bigr)\ge D \biggr)=0;
\]
\item[\rm(iii)] For any $a\in (0,1] $,
\[
\lim\limits
_{D\rightarrow\infty}\overline{\lim\limits
_{n\rightarrow
\infty}}\pr^n \biggl(\sum
\limits
_{1\le k\le n}\bigl\llvert \M \bigl(Q_k^{(n,C)}\I
_{|Q_k^{(n,C)}|\le a} \bigr\rrvert \F_{k-1}^n \bigr)\big\rrvert \ge D
\biggr)=0;
\]
\item[\rm(iv)] For any $\epsilon>0$ and $a\in (0,1] $,
\begin{align*}
&\lim\limits_{n}\pr^n \biggl(\sup\limits_{t\in\T}\biggl\llvert\sum\limits_{1\le k\le [\frac{nt}{T} ]}\M \bigl(Q_k^{(n,C)}\I_{|Q_k^{(n,C)}|\le a} \bigr\rrvert \F_{k-1}^n \bigr)\\
&\qquad -\int\limits_{0}^{t}\bigl(b-X_s^{(n,C)}\wedge C\bigr)ds\biggr\rrvert \ge \epsilon \biggr)=0;
\end{align*}
\item[\rm(v)] For any $\epsilon>0$ and $a\in (0,1] $,
\begin{align*}
&\lim\limits_{n}\pr^n \biggl(\sup\limits_{t\in\T}\biggl\llvert\sum\limits_{1\le k\le [ \frac{nt}{T} ]}\M \bigl( \bigl(Q_k^{(n,C)}\bigr)^2\I _{|Q_k^{(n,C)}|\le a} \bigr\rrvert \F_{k-1}^n\bigr)\\
&\qquad -\si^2\int\limits_{0}^{t}\bigl(X_s^{(n,C)}\wedge C\bigr) ds\biggr\rrvert \ge \epsilon \biggr)=0.
\end{align*}
\end{itemize}
Then
\[
\G^{n,C}\stackrel{W} {\longrightarrow}\G^C.
\]
\end{thm}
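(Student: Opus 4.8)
The plan is to identify both the prelimit process $S^{n,C}$ and the limit $S^C$ as stochastic (Dol\'eans-Dade) exponentials of the additive schemes already treated in Theorem~\ref{weak_conv_E}, and then to transport the weak convergence through the continuous exponential map. A process that is constant between the grid points $kT/n$ has Dol\'eans-Dade exponential equal to the product of its $(1+\text{jump})$ factors, so $S_t^{n,C}=e^{x_0}\prod_{1\le k\le[tn/T]}(1+Q_k^{(n,C)})=e^{x_0}\mathcal{E}\bigl(X^{(n,C)}-x_0\bigr)_t$. On the other hand, since $X^C-x_0$ is a continuous semimartingale with $\langle X^C-x_0\rangle_t=\si^2\int_0^t(X_s^C\wedge C)\,ds$ by~\eqref{procc1}, we have $S_t^C=\exp\{X_t^C-\tfrac{\si^2}{2}\int_0^t(X_s^C\wedge C)\,ds\}=e^{x_0}\mathcal{E}\bigl(X^C-x_0\bigr)_t$. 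Hence it suffices to establish $e^{x_0}\mathcal{E}(X^{(n,C)}-x_0)\stackrel{W}{\longrightarrow}e^{x_0}\mathcal{E}(X^C-x_0)$ in the Skorokhod space $\D[0,T]$.

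The first step is to record the convergence of the additive scheme, $\Q^{n,C}\stackrel{W}{\longrightarrow}\Q^C$, equivalently $X^{(n,C)}\stackrel{W}{\longrightarrow}X^C$ in $\D[0,T]$. This is essentially Theorem~\ref{weak_conv_E}: hypotheses (i), (iv), (v) here are (i), (ii), (iii) of Theorem~\ref{conditions_E}, the only discrepancy being that (v) uses $\M((Q_k^{(n,C)})^2\I_{|Q_k^{(n,C)}|\le a}\mid\F_{k-1}^n)$ instead of the conditional variance; but by~\eqref{exp_Q} the missing term $(\M(Q_k^{(n,C)}\I_{|Q_k^{(n,C)}|\le a}\mid\F_{k-1}^n))^2=((b-(X_{k-1}^{(n,C)}\wedge C))T/n)^2$ is $O(n^{-2})$, so its partial sums over $k\le[tn/T]$ are $O(n^{-1})$ and drop out in the limit. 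As the limit $X^C$ has continuous trajectories, $X^{(n,C)}$ converges jointly with any continuous functional of it; in particular, because $y\mapsto\int_0^{\cdot}(y_s\wedge C)\,ds$ is continuous on $\D[0,T]$ (with continuous limits), the pair $(X^{(n,C)},\si^2\int_0^{\cdot}(X_s^{(n,C)}\wedge C)\,ds)$ converges weakly to $(X^C,\si^2\int_0^{\cdot}(X_s^C\wedge C)\,ds)$.

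The second step is the logarithmic expansion of the product. Fix $a\in(0,1)$; by (i), $\pr^n(\sup_{1\le k\le n}|Q_k^{(n,C)}|>a)\to0$, so on an event of probability tending to $1$ every factor $1+Q_k^{(n,C)}$ lies in $(1-a,1+a)$, and from $\log(1+x)=x-\tfrac12x^2+\rho(x)$ with $|\rho(x)|\le|x|^3/(3(1-a))$ for $|x|\le a$ we obtain $\log S_t^{n,C}=X_t^{(n,C)}-\tfrac12B_t^n+R_t^n$, where $B_t^n:=\sum_{1\le k\le[tn/T]}(Q_k^{(n,C)})^2$ and $\sup_{t\le T}|R_t^n|\le(3(1-a))^{-1}\bigl(\sup_{1\le k\le n}|Q_k^{(n,C)}|\bigr)B_T^n$. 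A routine truncation step — replacing the indicators $\I_{|Q_k^{(n,C)}|\le a}$ by $1$ and each $(Q_k^{(n,C)})^2$ by its $\F_{k-1}^n$-conditional mean, the martingale remainder having conditional quadratic variation at most $\sup_k|Q_k^{(n,C)}|^2\sum_k\M((Q_k^{(n,C)})^2\mid\F_{k-1}^n)$ — together with (i), (ii), (iii) shows that $B_T^n$ is stochastically bounded and $\sup_{t\le T}\bigl|B_t^n-\si^2\int_0^t(X_s^{(n,C)}\wedge C)\,ds\bigr|\stackrel{\pr^n}{\longrightarrow}0$, and hence $\sup_{t\le T}|R_t^n|\stackrel{\pr^n}{\longrightarrow}0$ by (i). Combining this with the first step, $\log S^{n,C}=X^{(n,C)}-\tfrac12B^n+R^n\stackrel{W}{\longrightarrow}X^C-\tfrac{\si^2}{2}\int_0^{\cdot}(X_s^C\wedge C)\,ds=\log S^C$ in $\D[0,T]$ (the limit being continuous, the three summands converge jointly), and the continuous mapping theorem applied to $\exp\colon\D[0,T]\to\D[0,T]$ gives $S^{n,C}\stackrel{W}{\longrightarrow}S^C$, that is, $\G^{n,C}\stackrel{W}{\longrightarrow}\G^C$.

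The delicate point is the truncation bookkeeping in the second step: passing from the conditional, $a$-truncated quantities that figure in hypotheses (i)--(v) to statements about the plain sums $\sum(Q_k^{(n,C)})^2$ and $\sum|Q_k^{(n,C)}|^3$ requires controlling the contribution of the rare large increments $\{|Q_k^{(n,C)}|>a\}$ and of the martingale part of $B^n$ — precisely the estimates behind the functional martingale central limit theorem (see the semimartingale limit theorems of Jacod--Shiryaev or Liptser--Shiryaev). An alternative route that bypasses this is to invoke directly the continuity of the Dol\'eans-Dade exponential $\mathcal{E}$ at continuous limit points, which is legitimate here precisely because $\sup_k|\Delta(X^{(n,C)}-x_0)|=\sup_k|Q_k^{(n,C)}|\stackrel{\pr}{\longrightarrow}0$ by (i), and to conclude from the first step via the continuous mapping theorem.
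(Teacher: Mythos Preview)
The paper does not supply its own proof of Theorem~\ref{conditions}: the result is quoted as Theorem~3.3 of \cite{Mishura3} and then used as a black box to establish Theorem~\ref{weak_conv}. There is therefore no in-paper argument to compare yours against.

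That said, your reduction is a correct and natural way to derive the multiplicative statement from the additive one (Theorem~\ref{conditions_E}, itself quoted from \cite{Mishura3}): represent $S^{n,C}$ and $S^C$ as stochastic exponentials of $X^{(n,C)}-x_0$ and $X^C-x_0$, pull the additive convergence through via a second-order logarithmic expansion, and control the remainder using (i) together with the tightness conditions (ii)--(iii). Two remarks. First, when you bridge hypothesis (v) to condition (iii) of Theorem~\ref{conditions_E} you invoke the explicit formula~\eqref{exp_Q}; this is legitimate here because the theorem, despite its abstract-looking list of hypotheses, is stated for the specific CIR scheme of Section~\ref{section3}, but it means your argument is not a proof of the general version that presumably lives in \cite{Mishura3}. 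Second, your ``delicate point'' (replacing $\sum_k (Q_k^{(n,C)})^2$ by its compensator and killing the cubic remainder) is in fact easy in this setting precisely because $\sup_k|Q_k^{(n,C)}|\le C_2/\sqrt n$ is a \emph{deterministic} bound (established in the proof of Theorem~\ref{weak_conv_E}); this makes both the truncation step and the martingale-remainder quadratic-variation estimate trivial. A proof under the bare hypotheses (i)--(v), with (i) only in probability, would indeed require the Lenglart-type inequalities you gesture at.
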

We prove the following result using Theorem~\ref{conditions}.
\begin{thm}
\label{weak_conv}
$\G^{n,C}\stackrel{W}{\longrightarrow}\G^C$.
\end{thm}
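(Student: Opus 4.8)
The plan is to verify the five hypotheses (i)--(v) of Theorem~\ref{conditions} for the prelimit increments $Q_k^{(n,C)}$ defined in \eqref{rec_sch_E}, since Theorem~\ref{weak_conv_E} together with the structure of the proof of Theorem~\ref{conditions_E} already supplies most of the needed estimates. Recall that the truncation by $C$ makes everything bounded: from \eqref{rec_sch_E} one has $|Q_k^{(n,C)}|\le \frac{b+CT}{n}+\si\sqrt{\frac{TC}{n}}\le \frac{C_2}{\sqrt n}$ for a constant $C_2=C_2(b,\si,C,T)$, exactly as in the proof of Theorem~\ref{weak_conv_E}. This bound immediately gives condition (i), since $\sup_{1\le k\le n}|Q_k^{(n,C)}|\le C_2/\sqrt n\to 0$ deterministically, hence in probability.

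Next I would handle conditions (iv) and (v): these are \emph{verbatim} conditions (ii) and (iii) of Theorem~\ref{conditions_E} (in (v) the extra square of the conditional mean is negligible since it is $O(1/n^2)$ summed over $n$ terms, hence $O(1/n)$), so the computations already carried out in the proof of Theorem~\ref{weak_conv_E} apply word for word. Concretely, for $n\ge (C_2/a)^2$ one has $\I_{|Q_k^{(n,C)}|\le a}=1$, and then by symmetry of $q_k^{(n)}$,
\[
\M\bigl(Q_k^{(n,C)}\big|\F_{k-1}^n\bigr)=\frac{(b-(X_{k-1}^{(n,C)}\wedge C))T}{n},\qquad
\M\bigl((Q_k^{(n,C)})^2\big|\F_{k-1}^n\bigr)=\Bigl(\tfrac{(b-(X_{k-1}^{(n,C)}\wedge C))T}{n}\Bigr)^2+\si^2\tfrac{T}{n}\bigl(X_{k-1}^{(n,C)}\wedge C\bigr),
\]
and the difference between the Riemann-type sum $\sum_{1\le k\le[nt/T]}(\cdot)$ and the integral $\int_0^t(\cdot)\,ds$ is a single boundary term of order $1/n$ uniformly in $t$ because the integrand $s\mapsto b-(X_s^{(n,C)}\wedge C)$ (resp. $\si^2(X_s^{(n,C)}\wedge C)$) is a bounded step function that changes only at the grid points; the shift from $X_{k-1}$ to $X_k$ under the sum is absorbed in the same boundary term. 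Hence the $\sup_{t\in\T}$ in (iv) and (v) is $O(1/n)$ and the probabilities vanish.

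Conditions (ii) and (iii) are the genuinely new ones, but they are the \emph{easy} direction because of boundedness. For (iii), $|\M(Q_k^{(n,C)}\I_{|Q_k^{(n,C)}|\le a}|\F_{k-1}^n)|\le \frac{(b+C)T}{n}$ for $n$ large, so $\sum_{1\le k\le n}|\M(\cdot|\F_{k-1}^n)|\le (b+C)T$ deterministically; choosing $D>(b+C)T$ makes the probability in (iii) equal to $0$ for all large $n$, so the iterated $\limsup$ and $\lim_{D\to\infty}$ are trivially $0$. For (ii), similarly $\M((Q_k^{(n,C)})^2\I_{\cdots}|\F_{k-1}^n)\le \Bigl(\tfrac{(b+C)T}{n}\Bigr)^2+\si^2\tfrac{T}{n}C$, so the sum over $k\le n$ is bounded by $\tfrac{(b+C)^2T^2}{n}+\si^2TC$, which is $\le \si^2TC+1$ for $n$ large; again pick $D>\si^2TC+1$ and the probability is $0$. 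Thus (ii) and (iii) hold with room to spare.

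I do not anticipate a real obstacle: the truncation at level $C$ is exactly what trivializes the tightness-type conditions (ii)--(iii) (which in the untruncated problem would be the delicate moment bounds), while (iv)--(v) are inherited verbatim from the additive case already proved. The only point requiring a line of care is the passage from the conditional-expectation sums indexed by $[nt/T]$ to the continuous-time integrals with the $X^{(n,C)}_s\wedge C$ integrand --- i.e., correctly identifying the single $O(1/n)$ boundary correction uniformly in $t\in\T$ --- but this is the identical bookkeeping performed in the proof of Theorem~\ref{weak_conv_E}. Once (i)--(v) are in place, Theorem~\ref{conditions} yields $\G^{n,C}\stackrel{W}{\longrightarrow}\G^C$, which is the assertion.
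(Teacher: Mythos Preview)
Your proposal is correct and follows essentially the same approach as the paper: you verify conditions (i)--(v) of Theorem~\ref{conditions}, reusing from the proof of Theorem~\ref{weak_conv_E} the uniform bound $|Q_k^{(n,C)}|\le C_2/\sqrt{n}$ for (i) and the Riemann-sum computation for (iv), handling (ii) and (iii) by the trivial deterministic bounds that truncation affords, and treating (v) by noting that it differs from condition (iii) of Theorem~\ref{conditions_E} only by the squared-mean terms $\bigl(\tfrac{(b-(X_{k-1}^{(n,C)}\wedge C))T}{n}\bigr)^2$, whose sum is $O(1/n)$. The paper does exactly this, with only cosmetic differences in the constants used for (ii) and (iii) and an explicit re-derivation of (v) rather than a cross-reference.
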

\begin{proof}
According to Theorem~\ref{conditions}, we need to check
conditions (i)--(v). It was established in the proof of Theorem~\ref
{weak_conv_E} that conditions (i) and (iv) are satisfied. Let us show
that condition (ii) holds. It was also established in the proof of
Theorem~\ref{weak_conv_E} that $\sup_{0\le k\le
n}{|Q_k^{(n,C)}|}\le\frac{C_2}{\sqrt{n}}$. So, for all $a\in
(0,1] $, starting from some number $n$, we have
\begin{equation*}
\begin{split}
&\sum\limits_{1\le k\le n}\M \bigl(\bigl(Q_k^{(n,C)} \bigr)^2\I _{|Q_k^{(n,C)}|\le a}\big\rrvert \F_{k-1}^n \bigr)\\
&\quad =\sum\limits_{1\le k\le n}\M \bigl( \bigl(Q_k^{(n,C)}\bigr)^2 \big\rrvert \F_{k-1}^n \bigr)\le\sum\limits_{1\le k\le n}\frac{C_2^2}{n} \le C_2^2,
\end{split} %
\end{equation*}
whence condition (ii) holds. Now, \eqref{exp_Q} implies that, for all
$a\in (0,1] $, starting from some number $n$, we have
\[
\bigl\llvert \M \bigl(Q_k^{(n,C)}\I_{|Q_k^{(n,C)}|\le a} \bigr
\rrvert \F _{k-1}^n \bigr)\big\rrvert =\bigl\llvert \M
\bigl(Q_k^{(n,C)} \bigr\rrvert \F _{k-1}^n
\bigr)\big\rrvert \le\frac{C_3}{n},
\]
whence condition (iii) holds.

Let us check condition (v). For any $\epsilon>0$ and $a\in
.(0,1] $, we have
\begin{align*}
&\lim\limits_{n}\pr^n \biggl(\sup\limits_{t\in\T}\biggl\llvert\sum\limits_{1\le k\le [ \frac{nt}{T} ]}\M \bigl( \bigl(Q_k^{(n,C)}\bigr)^2\I _{|Q_k^{(n,C)}|\le a} \bigr\rrvert \F_{k-1}^n\bigr)-\si^2\int\limits_{0}^{t}\bigl(X_s^{(n,C)}\wedge C\bigr) ds\biggr\rrvert \ge\epsilon \biggr)\\
&\quad =\lim\limits_{n}\pr^n \biggl(\sup\limits_{t\in\T}\biggl\llvert\sum\limits_{1\le k\le [ \frac{nt}{T} ]} \biggl( \biggl(\frac{(b-(X_{k-1}^{(n,C)}\wedge C) )T}{n}\biggr)^2+\si^2\frac{T}{n} \bigl(X_{k-1}^{(n,C)}\wedge C \bigr) \biggr)\\
&\qquad - \sum\limits_{0\le k\le [ \frac{nt}{T} ]-1} \biggl(\si^2 \frac{T}{n}\bigl(X_{k}^{(n,C)}\wedge C \bigr) \biggr)-\si^2\bigl(X_{ [\frac{nt}{T} ]}^{(n,C)}\wedge C \bigr) \biggl(t-\frac{[\frac{nt}{T} ]T}{n}\biggr)\biggr\rrvert \ge\epsilon \biggr)\\
&\quad \le\lim\limits_{n}\pr^{n} \biggl(\sup\limits_{t\in\T} \biggl(\frac{(|b|+C)^2Tt}{n}+\si^2 \bigl(X_{ [\frac{nt}{T} ]}^{(n,C)}\wedge C\bigr) \biggl(t-\frac{ [\frac{nt}{T} ]T}{n} \biggr) \biggr)\ge \epsilon \biggr)=0.
\end{align*}
The theorem is proved.
\end{proof}
\begin{thm}
$\G^n\stackrel{W}{\longrightarrow}\G, \ n\rightarrow\infty$.
\end{thm}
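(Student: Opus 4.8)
The argument mirrors the proof of Theorem~\ref{weak_con}. The plan is to use the truncated-scheme convergence $\G^{n,C}\stackrel{W}{\longrightarrow}\G^C$ from Theorem~\ref{weak_conv}, remove the truncation, and conclude by Theorem~\ref{bill}. Thus it is enough to check that $\G^C\stackrel{W}{\longrightarrow}\G$ as $C\to\infty$ and that, for every $\epsilon>0$,
\[
\lim\limits_{C\rightarrow\infty}\overline{\lim\limits_{n\rightarrow\infty}}\,\pr\Bigl\{\sup\limits_{0\le t\le T}\bigl|S_t^n-S_t^{n,C}\bigr|\ge\epsilon\Bigr\}=0 .
\]

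For the first of these, I would invoke the last lemma of Section~\ref{section2}: its proof shows that $\{\exists t\in[0,T]:X_t\ne X_t^C\}\subseteq\{\sup_{t\in[0,T]}X_t\ge C\}$. On the complementary event $\{\sup_{t\in[0,T]}X_t<C\}$ we have $X_s\wedge C=X_s$ for all $s\le T$, so $X$ solves \eqref{procc1} on $[0,T]$; hence $X_t=X_t^C$ and, in addition, $\int_0^t(X_s^C\wedge C)\,ds=\int_0^tX_s\,ds$ for all $t\in[0,T]$, which gives $S_t^C=S_t$ for all $t\in[0,T]$ on that event. Therefore $\pr\{\sup_{t\in[0,T]}|S_t^C-S_t|\ge\epsilon\}\le\pr\{\sup_{t\in[0,T]}X_t\ge C\}\to0$ as $C\to\infty$; uniform convergence in probability on $[0,T]$ implies weak convergence in the Skorokhod space, so $\G^C\stackrel{W}{\longrightarrow}\G$.

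For the second, I would note that if $X_k^{(n)}=X_k^{(n,C)}$ for all $0\le k\le n$ then also $Q_k^{(n)}=Q_k^{(n,C)}$ for all $k$, so the products defining $S_t^n$ and $S_t^{n,C}$ coincide for every $t\in\mathbb{T}$. Hence
\[
\Bigl\{\sup\limits_{0\le t\le T}\bigl|S_t^n-S_t^{n,C}\bigr|\ge\epsilon\Bigr\}\subseteq\bigl\{\exists k,\ 0\le k\le n:\ X_k^{(n)}\ne X_k^{(n,C)}\bigr\},
\]
and the bound proved in the lemma of Section~\ref{section3} gives $\pr\{\exists k,\,0\le k\le n:\,X_k^{(n)}\ne X_k^{(n,C)}\}\le 2C^{-2}(x_0+bT)^2+8\sigma^2C^{-2}\gamma T$, with $\gamma$ independent of $n$ (cf.\ \eqref{eq11}). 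Passing to $\overline{\lim\limits_{n\rightarrow\infty}}$ and then letting $C\to\infty$ yields the displayed limit, and Theorem~\ref{bill} then delivers $\G^n\stackrel{W}{\longrightarrow}\G$.

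The only point requiring care is the first step: one must make sure that truncation changes neither $X$ nor the integral functional $\int_0^\cdot(X_s^C\wedge C)\,ds$ on the high-probability event $\{\sup_{[0,T]}X_t<C\}$, so that $S^C$ and $S$ are pathwise identical there. Once this is observed, the rest is the same truncation-removal mechanism — based on the uniform-in-$n$ control of $\pr\{\sup_{0\le k\le n}(X_k^{(n)})^2\ge C^2\}$ and on Theorem~\ref{bill} — that was used for Theorem~\ref{weak_con}.
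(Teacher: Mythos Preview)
Your proposal is correct and follows the same truncation-removal strategy as the paper, invoking Theorem~\ref{bill} together with Theorem~\ref{weak_conv} and the uniform-in-$n$ bound from Remark~\ref{zau3.1}. In fact you are more explicit than the paper: you spell out why $\G^C\stackrel{W}{\longrightarrow}\G$ (which the paper leaves implicit) and correctly translate the control from the $X$-processes to the $S$-processes via the inclusion $\{\sup_t|S_t^n-S_t^{n,C}|\ge\epsilon\}\subseteq\{\exists k:X_k^{(n)}\ne X_k^{(n,C)}\}$, whereas the paper's displayed inequality is written in terms of $X_t^{(n)}$ rather than $S_t^n$.
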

\begin{proof}
The proof immediately follows from Theorem~\ref{bill}, Theorem~\ref{weak_conv}, and Remark~\ref{zau3.1}. Indeed,
\begin{align*}
&\lim\limits_{C\rightarrow\infty}\overline{\lim\limits_{n\rightarrow\infty}}\pr \Bigl\{\sup\limits_{0\le t\le T}\bigl\llvert X_t^{(n)}- X_t^{(n,C)}\bigr\rrvert \ge\epsilon \Bigr\}\\
&\quad \le\lim\limits_{C\rightarrow\infty}\overline{\lim\limits_{n\rightarrow\infty}}\pr \bigl\{\exists t, t\in[0,T]: X_t^{(n)}\neq X_t^{(n,C)} \bigr\}=0.\qedhere
\end{align*}
\end{proof}
\begin{zau}
The weak convergence
\[
\widetilde{\G}^n\stackrel{W} {\longrightarrow}\widetilde{\G}, \quad n \rightarrow\infty,
\]
can be proved in a similar way.
\end{zau}

\appendix
\section{Additional results}
We state here Theorem~4.2 from \cite{Billingsley}:
\begin{thm}\label{bill}
Suppose that we have sets of processes $ \{X^{(n,C)}, n\ge1,
C>0 \}$, $ \{X^{C}, C>0 \}$, $ \{X^{(n)}, n\ge
1 \}$ and a stochastic process $X$ on the interval $[0,T]$. Let
$\mathbb{Q}^{n,C}$, $\mathbb{Q}^{C}$, $\mathbb{Q}^{n}$, and $\mathbb
{Q}$ be their corresponding measures. Suppose that, for any $C>0$,
$\mathbb{Q}^{n,C}\stackrel{W}{\longrightarrow}\mathbb{Q}^C$,
$n\rightarrow\infty$, and that $\mathbb{Q}^{C}\stackrel
{W}{\longrightarrow}\mathbb{Q}$ as $C\rightarrow\infty$. Suppose
further that, for any $\epsilon>0$,
\[
\lim\limits
_{C\rightarrow\infty}\overline{\lim\limits
_{n\rightarrow
\infty}}\pr \Bigl\{\sup\limits
_{0\le t \le T}
\bigl\llvert X_t^{(n,C)} - X_t^{(n)}\bigr
\rrvert \ge\epsilon \Bigr\}=0.
\]
Then $\mathbb{Q}^n\stackrel{W}{\longrightarrow}\mathbb{Q}$,
$n\rightarrow\infty$.
\end{thm}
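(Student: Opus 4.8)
The plan is to reproduce the classical ``convergence together'' argument --- it is precisely the proof of Theorem~4.2 in \cite{Billingsley} --- carried out on the path space $D[0,T]$ equipped with the uniform metric $\rho(x,y)=\sup_{0\le t\le T}|x_t-y_t|$. By the portmanteau theorem it suffices to show that $\M f(X^{(n)})\to\M f(X)$ for every bounded uniformly continuous functional $f$ on $(D[0,T],\rho)$. Write $\|f\|_{\infty}$ for its supremum norm and $w_f(\delta)=\sup\{|f(x)-f(y)|:\rho(x,y)\le\delta\}$ for its modulus of continuity, so that $w_f(\delta)\to0$ as $\delta\downarrow0$.

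First I would insert the two auxiliary families of processes. Fixing $\epsilon>0$ and $C>0$, the triangle inequality gives
\[
\bigl|\M f\bigl(X^{(n)}\bigr)-\M f(X)\bigr|\le\bigl|\M f\bigl(X^{(n)}\bigr)-\M f\bigl(X^{(n,C)}\bigr)\bigr|+\bigl|\M f\bigl(X^{(n,C)}\bigr)-\M f\bigl(X^{C}\bigr)\bigr|+\bigl|\M f\bigl(X^{C}\bigr)-\M f(X)\bigr|.
\]
For the first summand I would split the expectation over the ``bad'' event $A_{n,C}=\bigl\{\sup_{0\le t\le T}|X^{(n,C)}_t-X^{(n)}_t|\ge\epsilon\bigr\}$: on its complement one has $\rho(X^{(n)},X^{(n,C)})<\epsilon$, so the integrand is at most $w_f(\epsilon)$, whereas on $A_{n,C}$ it is at most $2\|f\|_{\infty}$; hence
\[
\bigl|\M f\bigl(X^{(n)}\bigr)-\M f\bigl(X^{(n,C)}\bigr)\bigr|\le w_f(\epsilon)+2\|f\|_{\infty}\,\pr(A_{n,C}).
\]
The middle summand tends to $0$ as $n\to\infty$ for each fixed $C$ because $\mathbb{Q}^{n,C}\stackrel{W}{\longrightarrow}\mathbb{Q}^{C}$ and $f$ is bounded and continuous, and the last summand tends to $0$ as $C\to\infty$ because $\mathbb{Q}^{C}\stackrel{W}{\longrightarrow}\mathbb{Q}$.

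Then I would pass to the limits in the right order. Taking $\overline{\lim\limits_{n\to\infty}}$ of the whole chain removes the middle summand and yields
\[
\overline{\lim\limits_{n\to\infty}}\,\bigl|\M f\bigl(X^{(n)}\bigr)-\M f(X)\bigr|\le w_f(\epsilon)+2\|f\|_{\infty}\,\overline{\lim\limits_{n\to\infty}}\,\pr(A_{n,C})+\bigl|\M f\bigl(X^{C}\bigr)-\M f(X)\bigr|.
\]
The left-hand side does not depend on $C$, so I next let $C\to\infty$ and use the main hypothesis, namely that $\overline{\lim}_{n\to\infty}\pr(A_{n,C})\to0$, together with $\M f(X^{C})\to\M f(X)$; this gives $\overline{\lim}_{n\to\infty}|\M f(X^{(n)})-\M f(X)|\le w_f(\epsilon)$. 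Finally, since $\epsilon>0$ was arbitrary and $w_f(\epsilon)\to0$ as $\epsilon\downarrow0$, we obtain $\M f(X^{(n)})\to\M f(X)$ for every bounded uniformly continuous $f$, i.e. $\mathbb{Q}^{n}\stackrel{W}{\longrightarrow}\mathbb{Q}$.

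The step I expect to need the most care is simply the bookkeeping of the three limits: they must be performed in the order $n\to\infty$, then $C\to\infty$, then $\epsilon\downarrow0$, so that each hypothesis is applied exactly where it is available --- in particular one cannot send $C\to\infty$ before $n\to\infty$, since the term $|\M f(X^{(n,C)})-\M f(X^{C})|$ is controlled only in the $n$-limit. A secondary, harmless point is the choice of topology on $D[0,T]$: the prelimit processes $X^{(n)},X^{(n,C)}$ are c\`adl\`ag while $X,X^{C}$ are continuous, so if one works with the Skorokhod topology for the weak convergences it suffices to note that the uniform distance dominates the Skorokhod distance, whence the bad-event hypothesis still controls the relevant metric and the argument goes through unchanged.
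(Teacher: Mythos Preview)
Your proof is correct and is precisely the classical ``convergence together'' argument. Note, however, that the paper does not supply its own proof of this theorem: it merely quotes it in the Appendix as Theorem~4.2 from Billingsley \cite{Billingsley}, so there is nothing in the paper to compare your argument against beyond the citation itself. Your write-up is essentially a reproduction of Billingsley's proof, which is exactly what is appropriate here.
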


Let $b,\si:\R\rightarrow\R$ be continuous functions. Consider the
stochastic differential equation
\begin{equation}
\label{dX} dX(t)=\si \bigl(X(t) \bigr)dW(t)+b \bigl(X(t) \bigr)dt,
\end{equation}
where $W=(W(t))$ is a Wiener process.
\begin{thm}\label{watanabe1}\cite[p.\ 177]{Watanabe}.
If $\si$ and $b$ are continuous functions satisfying the condition
\begin{equation}
\label{gr_cond}
\big|\si(x)\big|^2+\big|b(x)\big|^2\le K \bigl(1+|x|^2\bigr)
\end{equation}
for some positive constant $K$, then for any solution of \eqref{dX}
such that $\M (|X(0)|^2 )<\infty$, we have $\M
(|X(t)|^2 )<\infty$ for all $t>0$.
\end{thm}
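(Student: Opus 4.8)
The plan is to obtain the $L^2$-bound by the classical localization argument, since It\^o's formula cannot be applied to $|X(t)|^2$ directly before one knows that $\M|X(t)|^2$ is finite --- which is exactly the claim. I would fix $t>0$ and a solution $X$ of \eqref{dX} with $\M(|X(0)|^2)<\infty$.

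First I would introduce the stopping times $\tau_N=\inf\{s\ge0:|X(s)|\ge N\}$, $N\ge1$. Because any solution of \eqref{dX} has continuous trajectories, $\sup_{0\le s\le t}|X(s)|<\infty$ a.s., so $\tau_N\uparrow\infty$ a.s.\ as $N\to\infty$, and $|X(s\wedge\tau_N)|\le N$ for every $s$ and $N$. In particular $\M|X(s\wedge\tau_N)|^2\le N^2<\infty$, and, $\si$ being continuous, on $\{s\le\tau_N\}$ the integrand $2X(s)\si(X(s))$ of the stopped stochastic integral $\int_0^{t\wedge\tau_N}2X(s)\si(X(s))\,dW(s)$ is bounded by $2N\sup_{|x|\le N}|\si(x)|<\infty$, so this integral is a martingale with zero expectation.

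Next I would apply It\^o's formula to $|X(s\wedge\tau_N)|^2$ and take expectations, which gives
\[
\M\big|X(t\wedge\tau_N)\big|^2=\M\big|X(0)\big|^2+\M\int_0^{t\wedge\tau_N}\bigl(2X(s)b(X(s))+\big|\si(X(s))\big|^2\bigr)\,ds .
\]
Using $2xb(x)\le|x|^2+|b(x)|^2$ and the growth condition \eqref{gr_cond}, the integrand is at most $|X(s)|^2+|b(X(s))|^2+|\si(X(s))|^2\le K+(K+1)|X(s)|^2$; since $|X(s)|=|X(s\wedge\tau_N)|$ on $\{s<\tau_N\}$, this yields, with $\phi_N(t):=\M|X(t\wedge\tau_N)|^2$,
\[
\phi_N(t)\le\M\big|X(0)\big|^2+(K+1)t+(K+1)\int_0^t\phi_N(s)\,ds .
\]

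Finally, since $\phi_N$ is bounded by $N^2$ on $[0,t]$, Gr\"onwall's inequality yields $\phi_N(t)\le\bigl(\M|X(0)|^2+(K+1)t\bigr)e^{(K+1)t}$, a bound independent of $N$. Letting $N\to\infty$ and using $X(t\wedge\tau_N)\to X(t)$ a.s.\ together with Fatou's lemma gives
\[
\M\big|X(t)\big|^2\le\Bigl(\M\big|X(0)\big|^2+(K+1)t\Bigr)e^{(K+1)t}<\infty ,
\]
which is the assertion. The one delicate point is the localization itself: because finiteness of $\M|X(t)|^2$ is precisely what is being proved, one must carry the stopped process $X(\cdot\wedge\tau_N)$ through the It\^o computation and the Gr\"onwall step and pass to the limit $N\to\infty$ only at the very end; the quadratic estimate and Gr\"onwall's lemma are then entirely routine.
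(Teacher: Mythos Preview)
Your argument is correct and is precisely the standard localization--It\^o--Gr\"onwall proof one finds in textbooks (indeed it is essentially the argument in Ikeda--Watanabe at the cited page). Note, however, that the paper does not give its own proof of this statement: Theorem~\ref{watanabe1} is merely quoted in the appendix as an auxiliary result from \cite[p.~177]{Watanabe}, so there is nothing to compare against. One cosmetic remark: in your Gr\"onwall inequality the constant term should be $Kt$ rather than $(K+1)t$ if you use the bound $K+(K+1)|X(s)|^2$ literally, but of course replacing $K$ by $K+1$ only weakens the estimate and does no harm to the conclusion.
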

\begin{thm}\label{watanabe2}\cite[p.\ 182]{Watanabe}.
Suppose that $\si$ and $b$ are bounded functions. Assume further that
the following conditions are satisfied:
\begin{itemize}
\item[\rm(i)] there exists a strictly increasing function $\rho(u)$ on
$ [0,\infty )$ such that
\[
\rho(0)=0, \qquad \int\limits_{0+}\rho^{-2}(u)du=\infty, \quad and \quad \big\llvert \si(x)-\si(y)\big\rrvert \le\rho\big(|x-y|\big)
\]
for all $x,y\in\R$.
\item[\rm(ii)] there exists an increasing and concave function $k(u)$
on $ [0,\infty )$ such that
\[
k(0)=0, \qquad \int\limits_{0+}k^{-1}(u)du=\infty, \quad and \quad \big\llvert b(x)-b(y)\big\rrvert \le k\big(|x-y|\big)
\]
for all $x,y\in\R.$
\end{itemize}
Then the pathwise uniqueness of solutions holds \xch{for}{for Eq.}~\eqref{dX}, and
hence it has a unique strong solution.
\end{thm}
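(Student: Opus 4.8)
The plan is to prove pathwise uniqueness by the classical Yamada--Watanabe comparison device and then to obtain a unique strong solution from pathwise uniqueness together with the existence of a weak solution. So let $X^{(1)}$ and $X^{(2)}$ be two solutions of \eqref{dX} on one probability space, driven by the same Wiener process $W$ and with $X^{(1)}_0=X^{(2)}_0$, and set $\De_t=X^{(1)}_t-X^{(2)}_t$. Using the divergence $\int_{0+}\rho^{-2}(u)\,du=\infty$, I would pick a strictly decreasing sequence $1=a_0>a_1>a_2>\cdots\downarrow 0$ with $\int_{a_m}^{a_{m-1}}\rho^{-2}(u)\,du=m$ and continuous functions $\psi_m$ supported in $(a_m,a_{m-1})$ with $0\le\psi_m(u)\le\frac{2}{m\,\rho^2(u)}$ and $\int_{a_m}^{a_{m-1}}\psi_m(u)\,du=1$ (possible since $\int_{a_m}^{a_{m-1}}\frac{2}{m\,\rho^2(u)}\,du=2$). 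Then $\ph_m(x):=\int_0^{|x|}\!\int_0^y\psi_m(v)\,dv\,dy$ is an even $C^2$ function with $\ph_m(x)\uparrow|x|$, $|\ph_m'(x)|\le 1$, $\ph_m(0)=0$ and $\ph_m''(x)=\psi_m(|x|)$.

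Next I would apply the It\^o formula to $\ph_m(\De_t)$. Since $\si$ is bounded and $|\ph_m'|\le 1$, the stochastic integral term is a true martingale, so taking expectations removes it and leaves
\[
\M\,\ph_m(\De_t)=\M\!\int_0^t\ph_m'(\De_s)\bigl(b(X^{(1)}_s)-b(X^{(2)}_s)\bigr)ds+\frac12\,\M\!\int_0^t\ph_m''(\De_s)\bigl(\si(X^{(1)}_s)-\si(X^{(2)}_s)\bigr)^2ds .
\]
By condition~(i) and the support of $\psi_m$, the last integrand is at most $\frac12\psi_m(|\De_s|)\rho^2(|\De_s|)\le\frac1m$, so that term is bounded by $t/m$; by $|\ph_m'|\le 1$ and condition~(ii), the first term is at most $\M\int_0^t k(|\De_s|)\,ds$. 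Letting $m\to\infty$ and using monotone convergence $\ph_m(\De_t)\uparrow|\De_t|$, Fubini's theorem, and Jensen's inequality for the concave function $k$, I arrive at
\[
g(t)\le\int_0^t k\bigl(g(s)\bigr)\,ds,\qquad g(0)=0,
\]
where $g(t):=\M|\De_t|$ is finite (after the usual localization, using the linear growth bound \eqref{gr_cond} and Theorem~\ref{watanabe1}).

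Finally I would run an Osgood/Bihari-type argument on the nondecreasing $C^1$ majorant $u(t):=\int_0^t k(g(s))\,ds$, which satisfies $u(0)=0$ and $u'(t)=k(g(t))\le k(u(t))$ because $k$ is increasing: comparing $u+\de$ with the separable equation $z'=k(z)$ and letting $\de\downarrow 0$, the divergence $\int_{0+}k^{-1}(v)\,dv=\infty$ forces $u\equiv 0$, hence $g\equiv 0$, hence $\De\equiv 0$ a.s., and by continuity of paths $X^{(1)}$ and $X^{(2)}$ are indistinguishable. Pathwise uniqueness together with the existence of a weak solution — which follows from continuity and boundedness of $\si$ and $b$ via Skorokhod's existence theorem — gives, by the Yamada--Watanabe theorem, a unique strong solution of \eqref{dX}. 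I expect the main obstacle to be the construction of the functions $\ph_m$ so that the curvature bound $\frac12\ph_m''(u)\rho^2(u)\le\frac1m$ holds together with $\ph_m\uparrow|\cdot|$, and making the concluding Bihari step rigorous given that $g$ is a priori only a bounded measurable function.
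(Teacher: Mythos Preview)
The paper does not supply its own proof of this theorem: it is quoted in the Appendix as a reference result from Ikeda--Watanabe (with the page citation), and is used only as a tool in the proof of Lemma~\ref{lem1}. So there is no ``paper's proof'' to compare against; what you have written is precisely the classical Yamada--Watanabe argument from the cited source, and it is correct.

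A couple of minor comments. First, the localization you mention for the finiteness of $g(t)=\M|\De_t|$ is not needed here: since $b$ and $\si$ are assumed bounded, both $X^{(1)}$ and $X^{(2)}$ are square-integrable by Theorem~\ref{watanabe1}, so $g$ is finite outright. Second, in the Bihari step it is cleaner to argue by contradiction directly on $u(t)=\int_0^t k(g(s))\,ds$: if $u(t_0)>0$, set $t_1=\sup\{t<t_0:u(t)=0\}$ and integrate $u'(s)/k(u(s))\le 1$ over $(t_1+\varepsilon,t_0)$; letting $\varepsilon\downarrow 0$ the left-hand side diverges because $\int_{0+}k^{-1}(v)\,dv=\infty$, which is the contradiction. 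This avoids the slightly informal ``comparing $u+\de$ with the separable equation'' formulation. Finally, your appeal to weak existence via Skorokhod (continuity plus boundedness of the coefficients) and to the Yamada--Watanabe principle (pathwise uniqueness plus weak existence implies a unique strong solution) is exactly how the conclusion ``and hence it has a unique strong solution'' is justified in the reference.
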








%

\end{document}